\newtheorem{satz}{Theorem}[section]
\newtheorem{lemma}[satz]{Lemma}
\newtheorem{prop}[satz]{Proposition}
\newtheorem{korollar}[satz]{Corollary}
\newtheorem{bem}[satz]{Remark}
\newtheorem{definition}[satz]{Definition}
\theoremstyle{definition}
\newcommand{\A}{\mathbb{A}}
\newcommand{\N}{\mathbb{N}}
\newcommand{\Z}{\mathbb{Z}}
\newcommand{\vp}{\varphi}
\DeclareMathOperator{\id}{id}
\DeclareMathOperator{\Hom}{Hom}
\DeclareMathOperator{\Aut}{Aut}
\DeclareMathOperator{\Mod}{Mod}
\begin{document}
\title{JSJ decompositions of doubles of free groups}
\author{Simon Heil\\
Math. Seminar, Christian-Albrechts-University \\
Kiel, Germany\\
email: heil@math.uni-kiel.de}
\maketitle

\begin{abstract}
\noindent We classify all possible JSJ decompositions of doubles of free groups of rank two and we also compute the Makanin-Razborov diagram of a particular double of a free group and deduce that in general limit groups are not freely subgroup separable.
\end{abstract}

\section{Introduction}
A group is called \textit{subgroup separable} if for every finitely generated subgroup $H\leq G$ and $g\in G\setminus H$ there exists a homomorphism $\vp:G\to E$ to a finite group $E$ such that $\vp(H)=1$ and $\vp(g)\neq 1$. Subgroup separability was shown for polycyclic groups by Mal'cev \cite{mal'cev}, for free groups by M. Hall \cite{hall}, for surface  groups by P. Scott \cite{scott} and more recently for limit groups by H. Wilton \cite{wilton}.\\
Now one could wonder if it is possible to achieve Wilton's result by passing to a free quotient of the limit group and then use the result of M. Hall. This was indeed phrased as a question on mathoverflow.com \cite{mathoverflow} by K. Bou-Rabee, namely whether or not a limit group  is freely subgroup separable. A group $L$ is called \textit{freely subgroup separable} if for every finitely generated subgroup $H\leq L$ and every element $g\in L\setminus H$ there exists a homomorphism $\vp$ from $L$ to a free group, such that $\vp(g)\notin \vp(H)$.\\
If limit groups would have this property, then as mentioned before, together with the result of M. Hall, one could deduce the theorem of Henry Wilton that limit groups are subgroup separable.
First I. Agol notes in the discussion on matheoverflow.com that in order to have a chance of achieving a positive result, one has to assume that the subgroup $H$ is of infinite index in $L$.\\ 
Henry Wilton then conjectured that one can show that limit groups are not freely subgroup separable by computing the Makanin-Razborov diagram of a particular double of a free group (which is a limit group). Roughly spoken a Makanin-Razborov diagram of a finitely generated group $G$ is a finite directed tree that encodes all homomorphisms from $G$ to a non-abelian free group $F$ by yielding a parametrization of $\Hom(G,F)$. We give a precise definition later on.\\
We are following this approach and try to describe Makanin-Razborov diagrams of doubles of a free group in two generators. 
First classes of examples of Makanin-Razborov diagrams of such doubles, respectively the limit groups appearing in these diagrams were computed by Nicholas Touikan in \cite{touikan2}, \cite{touikan3}. We refer the reader to his papers for a broader background on the subject.\\
We compute the Makanin-Razborov diagram of a particular double of a free group of rank two, not covered by the work of Touikan and use this diagram to show that limit groups are in general not freely subgroup separable (even if one assumes that the subgroup $H$ is of infinite index, as mentioned by Agol). After the first version of our paper this double and its MR diagram were independently constructed and used by Louder and Touikan to show that limit groups are also not freely conjugacy separable (\cite{loudertouikan}).\\

The first step in constructing a MR diagram of a given f.g. (one-ended) group $L$, is to understand all splittings of $L$ along cyclic subgroups, that is to find a splitting of $L$ as a graph of groups which encodes in some sense all of these splittings. Such a graph of groups is called a JSJ decomposition of $L$. To make this more precise let $\mathcal{Z}$ be the set of all infinite cyclic subgroups of $L$. Given two $\mathcal{Z}$-trees $T_1$ and $T_2$, i.e. simplicial trees on which $L$ acts with edge stabilizers in $\mathcal{Z}$, we say that $T_1$ \textit{dominates} $T_2$ if any group which is elliptic in $T_1$ is also elliptic in $T_2$. A $\mathcal{Z}$-tree is \textit{universally elliptic} if its edge stabilizers are elliptic in every $\mathcal{Z}$-tree.

\begin{definition}
Let $L$ be a finitely generated group and $T$ a $\mathcal{Z}$-tree such that
\begin{enumerate}[(a)]
\item $T$ is universally elliptic and
\item $T$ dominates any other universally elliptic $\mathcal{Z}$-tree $T'$. 
\end{enumerate}
We call $T$ a \textnormal{JSJ tree} and the quotient graph of groups $\A=T/L$ a \textnormal{cyclic JSJ decomposition} (or for short \textnormal{JSJ decomposition}) of $L$.
\end{definition}

For arbitrary finitely generated groups JSJ decompositions do not always exists, but since in our case $L$ is a one-ended limit group and therefore in particular finitely presented, a JSJ decomposition of $L$ does exist. The existence and much more about JSJ decompositions (not only along cyclic edge groups) can be found in \cite{guirardel}.\\
Unfortunately JSJ decompositions are not unique in general, but rather form a deformation space, denoted by $\mathcal{D}_{JSJ}$, which consists of all cyclic JSJ trees of $L$. Hence two JSJ trees $T_1,T_2$ are in $\mathcal{D}_{JSJ}$ if and only if they have the same elliptic subgroups.\\
Still in some cases there exists a canonical JSJ decomposition, i.e. a decomposition that is invariant under automorphisms. In particular when $L$ is a one-ended limit group one can apply the \textit{tree of cylinder construction} of Guirardel and Levitt (see section 7 and Theorem 9.5 in \cite{guirardel}) to get a canonical cyclic JSJ decomposition.\\

Let now $\A$ be a cyclic JSJ decomposition of a finitely generated group $L$. A vertex group $A_v$ of $\A$ is called \textit{rigid} if it is elliptic in every splitting of $L$ along a cyclic subgroup and \textit{flexible} otherwise. The description of flexible vertices of JSJ decompositions (along a given class of groups) is one of the major difficulties in JSJ theory. In the case that $L$ is a one-ended limit group the structure of flexible vertex groups of a cyclic JSJ decomposition $\A$ is well-understood, namely these are either free abelian or \textit{quadratically hanging (QH) vertex groups}. A vertex group $A_v$ of $\A$ is QH if $A_v$ is isomorphic to the fundamental group of a compact surface $\Sigma$ with boundary in such a way that any incident edge group can be conjugated into a boundary subgroup of $\pi_1(\Sigma)$.\\

Now we are ready to state the formal definition of a Makanin-Razborov diagram.

\begin{definition}\label{MRdiagram}
Let $F$ be a non-abelian free group and $G$ be a finitely generated group. A finite directed rooted tree $T$ with root $v_0$ is called a \textnormal{Makanin-Razborov diagram} for $G$ if it satisfies the following conditions:
\begin{enumerate}[(a)]
\item The vertex $v_0$ is labeled by $G$.
\item Any vertex $v\in VT$, $v\neq v_0$, is labeled by a limit group $G_v$.
\item Any edge $e\in ET$ is labeled by an epimorphism $\pi_e:G_{\alpha(e)}\to G_{\omega(e)}$ such that for any homomorphism $\vp:G\to F$ there exists a directed path $e_1,\ldots,e_k$ from $v_0$ to some vertex $\omega(e_k)$ such that 
$$\vp=\Psi\circ\pi_{e_k}\circ\alpha_{k-1}\circ\pi_{e_{k-1}}\circ\ldots\circ\alpha_1\circ\pi_{e_1}$$
where $\alpha_i\in \Mod G_{\omega(e_i)}$ for $1\leq i\leq k$ and $\Psi$ is injective.
\end{enumerate}
\end{definition}

The modular group $\Mod(L)$ of a one-ended limit group $L$ is the subgroup of $\Aut(L)$ generated by Dehn twists along edges and extensions of automorphisms of flexible vertex groups of the (canonical) cyclic JSJ decomposition of $L$.
For more information on Makanin-Razborov diagrams and a formal definition of the modular group of a limit group see chapter 5 of \cite{sela}.\\

In the second section we introduce particular words in free groups, so-called $C$-test words, and compute the Makanin-Razborov diagram of a double of a free group of rank two along such a $C$-test word. We then use this diagram to show that limit groups are in general not freely subgroup separable. Unfortunately we were not able to give a description of all possible Makanin-Razborov diagrams of general doubles of free groups but at least we are able to classify all possible JSJ decompositions of these doubles in the third section. This is the first step towards the construction of more examples of MR diagrams or even to the possible classification of all MR diagrams of doubles of free groups of rank two in the future.\\
To compute the JSJ decompositions of a double of $F_2$ along a given word $w$ it is necessary to understand relative cyclic JSJ decompositions of the non-abelian free group $F_2$ relative to $w$. In \cite{cashen} C. Cashen has shown that for an arbitrary non-abelian free group $F$ there exists a canonical cyclic JSJ decomposition relative to finitely many maximal cyclic subgroups of $F$. Moreover given finitely many such subgroups Cashen and Manning have written an algorithm that computes this relative JSJ decomposition (\cite{cashenprogram}).\\

The results of this article are part of my doctoral thesis, which I am currently writing at the University of Kiel. I would like to thank my advisor Richard Weidmann for fruitful discussions, his
continuous support and helpful comments on an earlier version of this paper.

\section{Makanin-Razborov diagrams of doubles of free groups}
We first fix some notation.
Let $F_2=\langle x_1,x_2\rangle$ be a non-abelian free group, $w\in F_2$ and $G_w$ the double of $F_2$ along $\Z$ given by the embedding $\iota:\Z\to F_2, 1\mapsto w$.
In the following we denote such a double decomposition of $G_w$ along $w$ by
$G_w:=A\ast_C B$ where $A=F_2(a_1,a_2)$, $B=F_2(b_1,b_2)$, $C=\langle c\rangle\cong\Z$ with the embeddings $\iota_A:  C\to A: c\mapsto w_A$, 
$\iota_B:  C\to A: c\mapsto w_B$, where $w_A$ ($w_B$) is the image of $w$ under the canonical isomorphisms from $F(x_1,x_2)$ to $A$ ($B$) given by $x_i\mapsto a_i$ ($x_i\mapsto b_i$), $i\in\{1,2\}$.\\
We are now going to describe the Makanin-Razborov diagram of $G_w$ for a very specific word, a so-called C-test word:

\begin{definition}\cite{ivanov} 
Let $F_n$ be a non-abelian free group in $n$ generators.
A non-trivial word $w\in F_n$ is a C-test word in $n$ letters if for any two $n$-tuples $(A_1,\ldots,A_n)$, $(B_1,\ldots,B_n)$ of elements of a non-abelian free group $F$ the equality 
$$w(A_1,\ldots,A_n)=w(B_1,\ldots,B_n)\neq 1$$
implies the existence of an element $S\in F$ such that $SA_iS^{-1}=B_i$ for all $i\in\{1,\ldots,n\}$.
\end{definition}

\begin{bem} \label{bem}
Let $F_n$ be a free group of rank $n\geq 2$.
\begin{enumerate}
\item The definition of a C-test word does not depend on the rank of the free group $F$. This follows from Sela's solution of the Tarski problems, since being a C-test word $w$ in $n$ letters can be expressed by the following first-order sentence:
\begin{align*}
\forall a_1,\ldots,a_n\forall b_1,\ldots,b_n\exists s:\ &(w(a_1,\ldots,a_n)=w(b_1,\ldots,b_n)\neq 1)\\
&\Rightarrow(a_1=sb_1s^{-1}\wedge\ldots\wedge a_n=sb_ns^{-1})
\end{align*}
\item \cite{turner} A C-test word $w\in F_n$ is not contained in a proper retract of $F_n$.
\item \cite{ivanov} Every C-test word $w\in F_n$ is contained in the commutator subgroup of $F_n$.
\item \cite{ivanov} If a C-test word $w\in F_n$ is not a proper power, then the stabilizer of $w$ in $\Aut(F_n)$ is $\langle c_w\rangle$, where $c_w$ denotes conjugation by $w$.
\end{enumerate}
\end{bem}

\noindent The following result is due to Ivanov.

\begin{satz}\cite{ivanov}
For arbitrary $n\geq 2$ there exists a C-test word $w_n\in F_n$. In addition $w_n$ is not a proper power.
\end{satz}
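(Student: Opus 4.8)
The plan is to exhibit an explicit word whose values in a free group have a rigid cancellation structure, and then to read the conjugacy of the substituted tuples directly off the equality of two such values.

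First I would fix pairwise distinct large exponents and take for $w_n$ a product of sufficiently many commutators of high powers of the generators, for instance of the ``star'' shape
$$w_n=[x_1^{p_2},x_2^{q_2}]\,[x_1^{p_3},x_3^{q_3}]\cdots[x_1^{p_n},x_n^{q_n}]$$
(inserting further commutators when $n$ is small), with all the $p_j,q_j$ large and pairwise distinct. A direct inspection shows that such a $w_n$ is cyclically reduced, that every generator occurs in it, and that it lies in $[F_n,F_n]$ -- a property every C-test word must have by item~(3) of Remark~\ref{bem}. Since the exponents are pairwise distinct, the largest of them occurs exactly once in the cyclic word $w_n$, so $w_n$ is not a proper power, which already settles the final assertion of the theorem. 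Finally, spreading the exponents out makes $w_n$ satisfy a very strong small-cancellation condition, say $C'(1/100)$; and by item~(1) of Remark~\ref{bem} the C-test property does not depend on the ambient free group $F$, so for the verification I am free to substitute into whichever non-abelian $F$ is most convenient.

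Next, assuming $w(A_1,\dots,A_n)=w(B_1,\dots,B_n)\neq 1$ in $F$, I would analyse the reduced form of $w(\bar A)$. The key lemma is a cancellation estimate: because each generator enters $w_n$ only in high powers, the corresponding power blocks of the $A_i$ contribute long uncancelled ``cores'' to the reduced word, separated by junction segments of bounded length coming from the brackets. Thus $w(\bar A)$ admits a canonical decomposition into beacons (the surviving parts of these power blocks) and short junctions, and the same holds for $w(\bar B)$. Matching the two equal reduced words beacon by beacon then forces, for each $i$, that $A_i$ and $B_i$ are conjugate, by an element determined by the position of the corresponding beacon.

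The final step is to upgrade these position-dependent conjugacies to a single simultaneous conjugator: the point of letting every commutator share the generator $x_1$ is exactly that the repeated $A_1$-beacons pin down one element $S$, and the linking through $x_1$ forces the conjugators of all the other $A_i$ to coincide with it, giving $SA_iS^{-1}=B_i$ for every $i$. I expect the cancellation analysis in the non-generic situations to be the main obstacle: one must control the reduction when some $A_i$ are trivial, are proper powers, or create large cancellation at the junctions, and -- hardest of all -- rule out that the individual conjugacies $A_i\sim B_i$ are realised by genuinely different elements. It is precisely the robustness needed here that dictates how strong the small-cancellation hypothesis on $w_n$, and how large the exponents, must be chosen.
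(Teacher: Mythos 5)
Your proposed word is not a C-test word, and the failure is structural rather than a matter of choosing the exponents carefully. For $n\geq 3$, work in $F=F(a,b)$ and take $\bar A=(a,\,a,\,b,\,a,\dots,a)$ and $\bar B=(a,\,a^2,\,b,\,a,\dots,a)$. Every factor $[A_1^{p_j},A_j^{q_j}]$ in which $A_j$ is a power of $a$ is trivial, so
$$w_n(\bar A)=[a^{p_3},b^{q_3}]=w_n(\bar B)\neq 1,$$
no matter how large and how spread out the $p_j,q_j$ are. But there is no $S\in F$ with $SA_iS^{-1}=B_i$ for all $i$: the second coordinate would require $SaS^{-1}=a^2$, which is impossible (already in the abelianization). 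The root of the problem is that in your ``star'' word each variable $x_i$, $i\geq 2$, occurs in exactly one commutator factor; that factor can be made to vanish independently of the rest, and once it vanishes the value $w_n(\bar A)$ retains no trace of $A_i$, so $A_i$ can be changed arbitrarily inside the centralizer of $A_1^{p_i}$ without changing the value and without preserving the conjugacy class of the tuple. No small-cancellation hypothesis on $w_n$ as a word in the $x_i$ can rule this out, because the counterexample never sees the cancellation theory at all. (Your argument that the word is not a proper power is fine, but moot.)

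This also explains why the genuine construction — the paper itself gives no proof, it cites Ivanov — looks so different from yours. In Ivanov's word (see the explicit $n=2$ example in Corollary \ref{ivanov}) the commutator $[x_1^8,x_2^8]$ raised to distinct large exponents serves as a chain of markers separating \emph{single} occurrences of $x_1^{\pm1}$ and $x_2^{\pm1}$, so every variable is entangled with every marker and no factor can degenerate without collapsing the whole value (this is what Lee's refinement, Theorem \ref{testelement}, makes precise: the value vanishes exactly when $\langle A_1,\dots,A_n\rangle$ is cyclic); for $n\geq 3$ the word is built recursively by substituting such words into one another, not as a flat product. Separately, even where your word is not yet refuted, the method has a gap you yourself flag but do not close: a $C'(1/100)$ condition on $w_n$ in the generators says nothing about the reduced form of $w_n(A_1,\dots,A_n)$, since arbitrary substitutions destroy small cancellation (the $A_i$ may be conjugates sharing long prefixes, proper powers, or commuting elements). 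Controlling exactly that is the actual content of Ivanov's proof; in your sketch it is deferred as ``the main obstacle,'' so what remains is a plan, not a proof — and the plan's starting word is already a non-example.
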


D. Lee in \cite{lee} generalized this result to the following:

\begin{satz}\label{testelement}\cite{lee}
For arbitrary $n\geq 2$ there exists a word $w_n\in F_n$ that is a C-test word in $n$ letters such that $w_n$ is not a proper power and with the additional property that for elements $A_1,\ldots,A_n$ in a free group $F$ the following is equivalent:
\begin{enumerate}[(a)]
\item $w_n(A_1,\ldots,A_n)=1$.
\item The subgroup $\langle A_1,\ldots,A_n\rangle$ of $F$ is cyclic.
\end{enumerate}
\end{satz}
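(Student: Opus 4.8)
The plan for proving Theorem~\ref{testelement}.

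My goal is to produce a word $w_n \in F_n$ satisfying the C-test word property together with the extra cyclicity criterion relating $w_n(A_1,\dots,A_n)=1$ to the subgroup $\langle A_1,\dots,A_n\rangle$ being cyclic. Since Ivanov's theorem already furnishes a C-test word that is not a proper power, the natural strategy is to start from (or modify) such an Ivanov word and carefully arrange the vanishing locus. I would first recall the explicit shape of Ivanov's construction, which is built from high powers of basis elements assembled so that any evaluation $w(A_1,\dots,A_n)$ records the tuple $(A_1,\dots,A_n)$ up to simultaneous conjugacy. The key observation is that a C-test word cannot distinguish tuples that are simultaneously conjugate, so the question of when $w_n$ evaluates to $1$ depends only on the conjugacy class of the subgroup $\langle A_1,\dots,A_n\rangle$.

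The core of the argument is the equivalence between $w_n(A_1,\dots,A_n)=1$ and cyclicity of $\langle A_1,\dots,A_n\rangle$. For the direction (b)$\Rightarrow$(a), suppose $\langle A_1,\dots,A_n\rangle$ is cyclic, generated by some $g$, so each $A_i = g^{k_i}$. Then $w_n(A_1,\dots,A_n) = w_n(g^{k_1},\dots,g^{k_n})$ lies in the cyclic subgroup $\langle g\rangle$, and I would compute the total exponent: because every C-test word lies in the commutator subgroup (Remark~\ref{bem}(3)), substituting commuting powers of a single element forces the word to evaluate trivially, since the induced map $F_n \to \Z$ sending $x_i \mapsto k_i$ kills $w_n$. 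This gives $w_n(A_1,\dots,A_n)=1$ automatically, and it is exactly here that being in the commutator subgroup is indispensable. For the harder direction (a)$\Rightarrow$(b), I would argue the contrapositive: if $\langle A_1,\dots,A_n\rangle$ is non-cyclic, I must show $w_n(A_1,\dots,A_n)\neq 1$. The tuple generates a subgroup of rank $r \geq 2$, which is itself free of rank $r$; restricting to a basis and using that $w_n$ is a genuine C-test word (so non-degenerate evaluations are non-trivial), I want to guarantee the evaluation is non-zero. This is the step where Ivanov's word alone is insufficient and where Lee's refinement enters.

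The main obstacle, therefore, is ensuring that the word vanishes \emph{exactly} on cyclic tuples and on no others. An unmodified C-test word might accidentally vanish on some non-cyclic tuple, so I expect the heart of Lee's proof to be the construction of a suitably engineered word $w_n$ whose evaluation map, viewed on the variety of $n$-tuples in a free group, has vanishing locus precisely the cyclic tuples. The plan is to exploit that two $n$-tuples generating the same cyclic subgroup with the same total exponent data are handled uniformly, while any non-cyclic tuple yields a reduced non-trivial word through a length or cancellation estimate. Concretely, I would track the reduced length of $w_n(A_1,\dots,A_n)$ under the assumption of non-cyclicity, using the rigidity built into the C-test property to rule out total cancellation; the delicate combinatorial bookkeeping controlling these cancellations is where the real work lies, and it is essentially Lee's contribution refining Ivanov's template to pin down the vanishing set exactly.
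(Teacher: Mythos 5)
The paper offers no proof of this statement: it is quoted directly from Lee's paper \cite{lee}, so the ``paper's own proof'' is a citation, and your proposal has to be judged as a standalone argument. On those terms there is a genuine gap. Your direction (b)$\Rightarrow$(a) is correct and complete: if $\langle A_1,\ldots,A_n\rangle$ is cyclic the evaluation homomorphism $x_i\mapsto A_i$ factors through an abelian group, and any word in the commutator subgroup (which every C-test word is, by Remark \ref{bem}(3)) dies under such a map. But this direction holds for \emph{every} C-test word and is not the content of the theorem.

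The substance of the theorem is (a)$\Rightarrow$(b): exhibiting a specific $w_n$ that cannot vanish on any non-cyclic tuple. Here your proposal does not give a proof; it names the difficulty and defers it to ``Lee's contribution.'' Moreover, the mechanism you sketch --- ``using the rigidity built into the C-test property to rule out total cancellation'' --- cannot work even in principle, because the C-test property is vacuous on the vanishing locus: by definition it only constrains pairs of tuples on which $w$ takes equal \emph{non-trivial} values, so it gives no information whatsoever about when $w(A_1,\ldots,A_n)=1$. A word can perfectly well be a C-test word, not a proper power, and still vanish on some non-cyclic tuple; excluding this requires the explicit construction of $w_n$ (iterated commutators of high powers of the generators) together with the lengthy cancellation analysis that occupies most of Lee's paper. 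Since you supply neither the construction nor that analysis, your proposal amounts to the easy half of the equivalence plus an appeal to the very source being proved.
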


Hence we get the following corollary.

\begin{korollar}\label{test}\cite{lee}
There exists an element $w\in F_n$ such that if $\vp$ is an endomorphism of $F_n$, $\Psi$ is an endomorphism of $F_n$ with non-cyclic image, and $\vp(w) = \Psi(w)$, then $\vp = c_S \circ\Psi$ for some $S \in F_n$ such that $\langle S,\Psi(w)\rangle\cong \Z$.
If $n=2$, then $S\in\langle \Psi(w)\rangle$.
\end{korollar}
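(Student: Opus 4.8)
The plan is to derive the corollary from Theorem~\ref{testelement} by unwinding what the two hypotheses---that $\Psi$ has non-cyclic image and that $\vp(w)=\Psi(w)$---tell us. Let $w=w_n$ be the word produced by Theorem~\ref{testelement}, and write $A_i=\vp(x_i)$, $B_i=\Psi(x_i)$ for the images of a fixed generating set $x_1,\dots,x_n$ of $F_n$ under the two endomorphisms. Since $\Psi$ has non-cyclic image, the subgroup $\langle B_1,\dots,B_n\rangle$ is non-cyclic, so by the equivalence in Theorem~\ref{testelement} (applied in $F_n$ itself, which we may by Remark~\ref{bem}(1)) we get $\Psi(w)=w(B_1,\dots,B_n)\neq 1$. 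The hypothesis $\vp(w)=\Psi(w)$ then gives $w(A_1,\dots,A_n)=w(B_1,\dots,B_n)\neq 1$, which is precisely the C-test word equality. Applying the defining property of a C-test word yields an element $S\in F_n$ with $SA_iS^{-1}=B_i$ for all $i$, i.e. $c_S\circ\vp$ and $\Psi$ agree on the generators and hence $\vp=c_{S^{-1}}\circ\Psi$, or after renaming, $\vp=c_S\circ\Psi$ with a suitable $S$.

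Next I would establish the constraint $\langle S,\Psi(w)\rangle\cong\Z$. From $\vp=c_S\circ\Psi$ we obtain $\vp(w)=S\,\Psi(w)\,S^{-1}$, while the hypothesis forces $\vp(w)=\Psi(w)$; hence $S$ commutes with $\Psi(w)$. Two commuting elements of a free group lie in a common maximal cyclic subgroup, so $\langle S,\Psi(w)\rangle$ is cyclic; it is infinite cyclic since $\Psi(w)=w(B_1,\dots,B_n)\neq 1$ is nontrivial. This gives $\langle S,\Psi(w)\rangle\cong\Z$ as claimed.

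For the final assertion in the case $n=2$, I would use the extra structure of a C-test word that is not a proper power. By Remark~\ref{bem}, $w=w_2$ is not a proper power in $F_2$, and by part (3) it lies in the commutator subgroup. The point is that $\Psi(w)$ should be shown to be non-primitive (indeed not a proper power) in a way that pins $S$ into $\langle\Psi(w)\rangle$ rather than merely into the maximal cyclic subgroup containing it. The cleanest route is via Remark~\ref{bem}(4): since $w$ is a C-test word that is not a proper power, $\stab_{\Aut(F_n)}(w)=\langle c_w\rangle$. One would like to transport this rigidity through $\Psi$ so that the only elements conjugating $\Psi(w)$ to itself in a manner compatible with the equation are powers of $\Psi(w)$ itself, forcing $S\in\langle\Psi(w)\rangle$.

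The main obstacle will be exactly this last step: upgrading ``$S$ lies in the maximal cyclic subgroup containing $\Psi(w)$'' to ``$S\in\langle\Psi(w)\rangle$'' for $n=2$. For general $n$ the centralizer of $\Psi(w)$ need only be the maximal cyclic group containing it, so the sharper conclusion must be special to rank two and must exploit that $\Psi(w)$ is, in rank two, already a maximal cyclic element---this is where the precise form of Lee's word and the fact that it is not a proper power (together with the rank-two restriction on how a word in the commutator subgroup can sit inside $F_2$) do the real work. I would expect to invoke a primitivity or maximality property of $\Psi(w)$ specific to $F_2$, and verifying that property is the delicate part of the argument.
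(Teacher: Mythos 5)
Your derivation of the general-$n$ part is correct and complete: non-cyclic image of $\Psi$ gives $\Psi(w)\neq 1$ via the equivalence in Theorem \ref{testelement}, the C-test property then produces a conjugator $S$ with $\vp=c_S\circ\Psi$, and the hypothesis $\vp(w)=\Psi(w)$ forces $S$ to commute with $\Psi(w)$, whence $\langle S,\Psi(w)\rangle\cong\Z$. Be aware, however, that the paper itself does not prove this corollary at all: it is quoted from \cite{lee} (the "Hence we get" covers exactly the part you proved), and the $n=2$ refinement in particular lives inside Lee's proof rather than following from any statement in the paper.

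That $n=2$ clause is the genuine gap in your proposal, and the route you sketch will not close it. Remark \ref{bem}(4) concerns the stabilizer of $w$ in $\Aut(F_n)$, but here $\Psi$ is only a monomorphism (for $n=2$, non-cyclic image forces injectivity since $F_2$ is Hopfian), not an automorphism, and $S$ need not normalize $\Psi(F_2)$, so there is nothing to "transport". What is actually needed is the following property of Lee's word: for every endomorphism $\Psi$ of $F_2$ with non-cyclic image, $\Psi(w_2)$ is not a proper power in $F_2$, i.e. $\Psi(w_2)$ generates its own centralizer. This is not a convenience but is equivalent to the claim: since the centralizer of the non-cyclic subgroup $\Psi(F_2)$ is trivial, the conjugator $S$ with $\vp=c_S\circ\Psi$ is unique, so if one had $\Psi(w_2)=r^k$ with $k\geq 2$, then $\vp:=c_r\circ\Psi$ would satisfy all hypotheses while its unique conjugator $r$ lies outside $\langle\Psi(w_2)\rangle=\langle r^k\rangle$. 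Moreover this property does not follow formally from Theorem \ref{testelement}: being a C-test word, not a proper power, and vanishing exactly on cyclic tuples are all consistent with some image being a proper power, and injectivity of $\Psi$ does not help, since monomorphisms of free groups can send non-proper-powers to proper powers (e.g. $x^2$ is a basis element of $\langle x^2,y\rangle$ but a proper power in $F_2$). The fact that Lee's specific word $w_2$ has this property is established inside the proofs of \cite{lee}, which is why the paper cites that source for the corollary instead of deriving it; the honest completion of your argument is to do the same.
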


\begin{definition}
 We call a C-test word which satisfies the assumptions of Theorem \ref{testelement} and Corollary \ref{test} an \textnormal{Ivanov word}.
\end{definition}

The following corollary follows immediately from the proof of the above theorem in \cite{lee}.

\begin{korollar}\label{ivanov}
Let $F_2=\langle x_1,x_2\rangle$. Then
\begin{align*}
w=&[x^8_1,x^8_2]^{100}x_1[x^8_1,x^8_2]^{200}x_1[x^8_1,x^8_2]^{300}x_1^{-1}[x^8_1,x^8_2]^{400}x_1^{-1}\\
&\cdot[x^8_1,x^8_2]^{500}x_2[x^8_1,x^8_2]^{600}x_2[x^8_1,x^8_2]^{700}x_2^{-1}[x^8_1,x^8_2]^{800}x_2^{-1}
\end{align*}
is an Ivanov word.
\end{korollar}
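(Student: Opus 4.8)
The plan is to identify the displayed word $w$ with the word $w_2\in F_2$ that Lee constructs in the course of proving Theorem \ref{testelement}, and then to read off the defining properties of an Ivanov word directly from that proof. First I would recall the general shape of Lee's construction: for each $n\geq 2$ one fixes a separating commutator $u=[x_1^8,x_2^8]$ and forms $w_n$ by inserting strictly increasing powers $u^{100},u^{200},\ldots$ between the generators $x_1^{\pm 1},\ldots,x_n^{\pm 1}$, taken in a fixed symmetric order. The strictly increasing exponents are the crucial device: they force the cyclically reduced form of $w_n$ to admit a unique syllable decomposition, so that the location of each generator is recoverable from the element itself. This rigidity of the syllable structure is precisely what powers the C-test property.

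Specializing to $n=2$, the generator pattern is the eight-slot sequence $x_1,x_1,x_1^{-1},x_1^{-1},x_2,x_2,x_2^{-1},x_2^{-1}$ and the separating exponents are $100,200,\ldots,800$; substituting these into Lee's template reproduces exactly the word displayed in the statement. Hence it suffices to confirm that the constants $8$ and $100k$ appearing here are the ones dictated by Lee's construction, with no hidden renormalization in the indexing, after which every assertion about $w$ becomes a special case of a statement already proved in \cite{lee}.

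With this identification in hand, the three properties follow from Lee's proof. That $w$ is a C-test word, and that $w(A_1,A_2)=1$ holds if and only if $\langle A_1,A_2\rangle$ is cyclic, is the content of Theorem \ref{testelement} applied to $w_2$; that $w$ is not a proper power follows from the uniqueness of the syllable decomposition, which leaves the cyclic word with no nontrivial rotational symmetry; and the endomorphism-rigidity statement is Corollary \ref{test}. Thus $w$ satisfies the assumptions of both Theorem \ref{testelement} and Corollary \ref{test}, which is exactly the definition of an Ivanov word.

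The main obstacle I expect is the final sharpening in the case $n=2$: extracting from Lee's argument that the conjugator $S$ can be taken inside $\langle\Psi(w)\rangle$, rather than merely in some cyclic subgroup containing $\Psi(w)$. This requires tracking how the centralizer of $w_2$ enters Lee's proof, and combining it with Remark \ref{bem}(4), namely that the stabilizer of a non-proper-power C-test word in $\Aut(F_2)$ is generated by conjugation by $w$. Everything else is bookkeeping once $w$ has been matched to Lee's constructed word, which is why the corollary is stated as an immediate consequence of the proof.
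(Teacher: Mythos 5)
Your proposal takes essentially the same route as the paper: the paper's entire justification is that this word is the word constructed in Lee's proof (specialized to $n=2$), so that the C-test property, the non-proper-power property, the cyclic-image criterion of Theorem \ref{testelement}, and the rigidity statement of Corollary \ref{test} (including the $n=2$ sharpening $S\in\langle\Psi(w)\rangle$) are all read off directly from the arguments in \cite{lee}. Your identification-plus-citation argument, including the caveat about tracking the conjugator in the $n=2$ case through Lee's proof, is exactly what the paper means by ``follows immediately from the proof of the above theorem in \cite{lee}.''
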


\noindent Before we start the construction of a Makanin-Razborov diagram of a double $G_w$ along an Ivanov word $w$ we make the following observation.

\begin{lemma}\label{one-ended}
Let $F_n$ be a non-abelian free group, $w\in F_n$ and $G_w$ the double of $F_n$ along $w$. $G_w$ is one-ended if and only if $w$ is not contained in a free factor of $F_n$.
\end{lemma}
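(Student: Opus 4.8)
The plan is to characterise one-endedness via Stallings' theorem: since $G_w$ contains the non-abelian free group $A\cong F_n$, it is neither finite nor two-ended, so $G_w$ is one-ended if and only if it admits no nontrivial decomposition as a free product (equivalently, being torsion-free, no splitting over the trivial subgroup). Thus I would prove the two implications separately: first that if $w$ lies in a proper free factor then $G_w$ splits as a free product, and conversely that if $G_w$ splits as a free product then $w$ lies in a proper free factor.

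For the first implication, suppose $F_n=H\ast K$ with $w\in H$ and $K\neq 1$. Carrying this decomposition through both vertex groups and using that the amalgamation identifies only the copies of $w$, which lie in the $H$-factors, I would rewrite $G_w\cong (H_A\ast_{\langle w\rangle}H_B)\ast K_A\ast K_B$. Since $K\neq 1$ the factors $K_A,K_B$ are nontrivial, so this is a genuine free product and $G_w$ is not one-ended. This step is a routine manipulation of the defining presentation, or an application of the universal property of the pushout.

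For the converse, I would let $G_w$ act minimally on a tree $T'$ with trivial edge stabilisers corresponding to a nontrivial free splitting, and restrict the action to the vertex groups $A,B$ and the edge group $C=\langle w\rangle$. Since $A\cong F_n$ is free, the induced action expresses $A$ as a free product of the elliptic subgroups $A\cap\stab(v)$, which are free factors of $A$, together with a free group; likewise for $B$. The crux is then the following dichotomy. If $C$ is elliptic in $T'$, then $w$ is conjugate into a vertex group of the induced decomposition of $A$ (and of $B$); moreover $A$ and $B$ cannot both be elliptic, for otherwise $C\leq A\cap B$ would fix the geodesic between their fixed points and force a global fixed point, contradicting minimality and nontriviality. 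Hence at least one induced decomposition is nontrivial and exhibits $w$ inside a proper free factor of $F_n$. Phrased differently, the hypothesis that $w$ lies in no proper free factor says exactly that $F_n$ is freely indecomposable relative to $\langle w\rangle$, and this rules out the elliptic case entirely.

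The main obstacle is the remaining case in which $C=\langle w\rangle$ acts hyperbolically on $T'$; here the relative indecomposability hypothesis, which constrains only splittings in which $C$ is elliptic, gives no information directly. To handle this I would pass to a free splitting of minimal complexity, realised topologically on the graph of spaces $X=R_A\cup_w(S^1\times[0,1])\cup_w R_B$ with $\pi_1(X)\cong G_w$, where $R_A,R_B$ are roses with fundamental group $F_n$ and the annulus $S^1\times[0,1]$ carries $C$. Putting the track dual to $T'$ in minimal position with respect to the amalgamating annulus and removing intersections by the usual innermost-component and surgery argument, using that $X$ is aspherical and that the annulus is $\pi_1$-injective, I expect to reduce to a splitting disjoint from the annulus, that is, one in which $C$ is elliptic, thereby reducing the hyperbolic case to the elliptic one already treated. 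Establishing this reduction rigorously, namely that the free splitting can always be chosen with $C$ elliptic, is the technical heart of the proof; once it is in place, the relative free indecomposability of $A$ and $B$ closes the argument.
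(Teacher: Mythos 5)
Your forward direction (free factor $\Rightarrow$ free splitting of $G_w$) is correct, and your treatment of the elliptic case of the converse is also essentially right: if $\langle w\rangle\neq 1$ is elliptic in a free splitting tree of $G_w$, then since edge stabilizers are trivial its fixed point is unique, so $A$ and $B$ cannot both be elliptic, and applying Kurosh/Grushko to whichever factor acts non-elliptically exhibits $w$ in a proper free factor of $F_n$. The genuine gap is exactly where you locate it yourself: the case where $\langle w\rangle$ acts hyperbolically on the free splitting tree. Your proposed remedy --- realize the splitting by a track in the graph of spaces $R_A\cup_w(S^1\times[0,1])\cup_w R_B$, put it in minimal position with the annulus, and surger away intersections until the track misses the annulus --- is only a plan, and you say explicitly that you have not established it. This is not a minor technicality that can be waved through: the claim that a one-edge amalgam over $\mathbb{Z}$ whose factors are freely indecomposable relative to the edge group is one-ended is a theorem in its own right (a relative form of Shenitzer's lemma, due in various generalities to Shenitzer, Swarup, and Touikan), and its proof is precisely the surgery/folding argument you are sketching. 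As written, your proposal proves the easy direction and half of the hard one.

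The paper sidesteps all of this with a single citation: Corollary 1.5 of Touikan's paper on one-endedness of graphs of groups states that $G_w=A\ast_C B$ is one-ended if and only if $A$ is one-ended relative to $\iota_A(C)$, i.e.\ if and only if there is no free splitting $A_1\ast A_2$ of $A$ with $\iota_A(C)\leq A_1$; for $A\cong F_n$ this is exactly the condition that $w$ lies in no proper free factor. So the cleanest way to close your gap is to invoke that result (or Shenitzer/Swarup) for the hyperbolic case --- at which point your elliptic-case analysis becomes redundant, since the cited theorem covers both cases at once. Alternatively, carrying out your track-surgery reduction rigorously would amount to reproving that theorem, which is a substantially longer undertaking than the lemma itself suggests.
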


\begin{proof}
By Corollary 1.5 in \cite{touikan} $G_w$ is one-ended if and only if $A$ is one-ended relative to $\iota_A(C)$, i.e. if and only if there does not exist a free splitting $A_1\ast A_2$ of $A$ such that $\iota_A(C)$ is contained in $A_1$.
\end{proof}

The following theorem amounts to a computation of the Makanin-Razborov diagram of a double of $F_2$ along an Ivanov word.

\begin{satz}\label{MR}
Let $w \in F_2$ be an Ivanov word and $G_w=A\ast_{\langle w\rangle}B$ be the double of $F_2$ along $w$. Then after precomposition with a Dehn twist, every homomorphism from $G_w$ to a non-abelian free group factors through either the canonical retraction $\pi:G_w\to A$ or the projection $\eta:G_w\to \Z^2\ast\Z^2$.
\end{satz}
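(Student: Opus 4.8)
The plan is to analyze an arbitrary homomorphism $\varphi\colon G_w\to F$ through its restrictions to the two free factors. Write $A_i=\varphi(a_i)$ and $B_i=\varphi(b_i)$ for $i\in\{1,2\}$; since $w_A=w_B=w$ holds in $G_w$, the images satisfy the single compatibility relation $w(A_1,A_2)=w(B_1,B_2)$ in $F$. The argument then splits according to whether this common element is trivial.

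First I would treat the case $w(A_1,A_2)=w(B_1,B_2)=1$. By the equivalence in Theorem \ref{testelement} applied with $n=2$, the vanishing $w(A_1,A_2)=1$ forces $\langle A_1,A_2\rangle$ to be cyclic, hence abelian, and likewise $\langle B_1,B_2\rangle$ is abelian. Therefore $\varphi|_A$ and $\varphi|_B$ each factor through the abelianization $\Z^2$ of the respective factor. As $w$ lies in the commutator subgroup of $F_2$ (Remark \ref{bem}(3)), the amalgamated subgroup $C$ dies, so $\eta$ is well-defined and $\varphi$ kills the normal closure of $[A,A]\cup[B,B]$; consequently $\varphi$ factors through $\eta\colon G_w\to\Z^2\ast\Z^2$, with no Dehn twist needed.

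The substantial case is $w(A_1,A_2)=w(B_1,B_2)\neq 1$. Here the equivalence in Theorem \ref{testelement} first shows that both $\langle A_1,A_2\rangle$ and $\langle B_1,B_2\rangle$ are non-cyclic, and then the defining property of a $C$-test word yields an element $S\in F$ with $B_i=SA_iS^{-1}$. Conjugating the relation $w(B_1,B_2)=w(A_1,A_2)$ shows that $S$ commutes with $\varphi(w)$. The crucial refinement, supplied by the $n=2$ clause of Corollary \ref{test}, is that in fact $S\in\langle\varphi(w)\rangle$, say $S=\varphi(w)^{-m}$. I would then precompose $\varphi$ with the Dehn twist $\tau$ along the edge of the amalgam determined by $w^m$, i.e.\ $\tau|_A=\id$ and $\tau(b)=w^m b\,w^{-m}$ for $b\in B$. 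A direct computation gives $\varphi(\tau(b_i))=\varphi(w)^m B_i\varphi(w)^{-m}=(\varphi(w)^mS)A_i(\varphi(w)^mS)^{-1}=A_i$, while $\varphi(\tau(a_i))=A_i$; hence $\varphi\circ\tau$ sends $a_i$ and $b_i$ to the same element and therefore factors through the retraction $\pi\colon G_w\to A$.

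The main obstacle is the promotion of the conjugator $S$ from a mere centralizing element of $\varphi(w)$ to an honest power of $\varphi(w)$: the $C$-test word axiom alone only places $S$ in the cyclic centralizer $\langle r\rangle$ generated by the root $r$ of $\varphi(w)$ in $F$, whereas we need $S\in\langle\varphi(w)\rangle$ so that the twist by a power of $w$ cancels it exactly. This is precisely the content of the $n=2$ case of Corollary \ref{test}, and making it applicable requires care, since Corollary \ref{test} is phrased for endomorphisms of $F_2$ while here $\varphi|_A,\varphi|_B$ land in an arbitrary free group $F$; one reduces to the stated form by passing to the rank-two free images $\langle A_1,A_2\rangle$ and $\langle B_1,B_2\rangle$ and invoking the rank-independence of the $C$-test word property from Remark \ref{bem}(1). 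Once this is in place, the remaining bookkeeping with the Dehn twist is routine.
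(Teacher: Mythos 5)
Your proof is correct and follows essentially the same route as the paper's: the same dichotomy on whether $\varphi(w)$ vanishes, Theorem \ref{testelement} to handle the cyclic-image case via $\eta$, and the $n=2$ clause of Corollary \ref{test} to produce a power of $\varphi(w)$ as conjugator, which a Dehn twist along $\langle w\rangle$ then absorbs so that the map factors through $\pi$. The only difference is expository: you spell out the extension of Corollary \ref{test} to targets other than $F_2$ --- where the clean fix is to embed the finitely generated free group $\langle A_1,A_2,B_1,B_2\rangle$ into a copy of $F_2$ and apply the corollary to the resulting endomorphisms, rather than to invoke Remark \ref{bem}(1), which covers the C-test property but not the sharper conclusion $S\in\langle\Psi(w)\rangle$ --- a point the paper handles silently by taking the target to be $F_2$ from the outset.
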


\begin{proof}
First note that by Remark \ref{bem} $w$ is not contained in a proper retract of $F_2$. In particular $w$ is not contained in a free factor of $F_2$ and hence $G_w$ is one-ended by Lemma \ref{one-ended}. Moreover $G_w$ is clearly a $F_2$-limit group. This follows from the fact that $w$ is not a proper power and therefore we can embed $G_w$ into the extension of centralizer along $\langle w\rangle$ (see for example \cite{bbaumslag}).\\
Let now $\vp: G_w\to F_2$ be a homomorphism. We distinguish two cases:\\
(1) Assume that $w\in \ker \vp$. Then $\vp(A)$ and $\vp(B)$ are cyclic and hence $\vp$ factors through $\eta$.\\
(2) Assume now that $w\notin \ker \vp$. Since by Remark \ref{bem}, $w$ is contained in $[F_2,F_2]$, it follows that $\vp(A)\cong F_2\cong \vp(B)$. Now Corollary \ref{test} yields that there exists $k\in\N$ such that $\vp|_B=\vp|_A\circ c_{w_A^k}$. Hence after precomposing with a Dehn-twist $\alpha$ along $\langle w\rangle$, $\vp\circ\alpha$ factors through $\pi:A\ast_{\langle w\rangle}B\to A$. To be more precise $\alpha:G_w\to G_w$ is given by $\alpha|_A=\id$ and $\alpha|_B=c_{w^k_B}$ and we have then $\vp=\vp|_A\circ\pi\circ\alpha$.
\end{proof}

\noindent Theorem \ref{MR} implies that the Makanin-Razborov diagram of $G_w$ is as in Figure \ref{mr}.

\begin{figure}[htbp]
\centering
\includegraphics[scale=1]{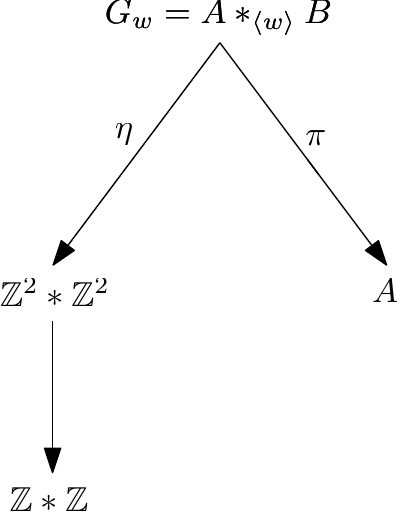}
\caption{The MR diagram for $G_w$}
\label{mr}
\end{figure}

\begin{korollar}
Let $w \in F_2$ be an Ivanov word and $G_w$ be the double of $F_2$ along $w$. Then $G_w$ is not freely subgroup separable.
\end{korollar}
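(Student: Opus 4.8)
The plan is to exhibit, once and for all, a finitely generated subgroup $H\le G_w$ together with an element $g\in G_w\setminus H$ such that $\vp(g)\in\vp(H)$ for \emph{every} homomorphism $\vp$ from $G_w$ into a free group; by definition this witnesses that $G_w$ is not freely subgroup separable. The whole strategy rests on Theorem \ref{MR}, which gives complete control over $\Hom(G_w,F)$: its proof shows that each such $\vp$ has one of exactly two shapes, namely $\vp=\theta\circ\pi\circ\alpha$ for some $\theta\colon A\to F$ and some Dehn twist $\alpha$ along $\langle w\rangle$ (the \emph{retraction type}, occurring when $w\notin\ker\vp$), or $\vp=\psi\circ\eta$ for some $\psi\colon\Z^2\ast\Z^2\to F$ (the \emph{abelian type}, occurring when $w\in\ker\vp$). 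It therefore suffices to choose $H$ and $g$ that defeat these two families simultaneously.

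I would take $H=A$ and $g=b_1^{-1}w_B\,b_1$. First I would check that $g\notin A$. Since $w_B,b_1\in B$ we have $g\in B$, so $g\in A$ would force $g\in A\cap B=C=\langle w_B\rangle$, i.e. $b_1^{-1}w_B b_1=w_B^{m}$ for some $m$. Comparing cyclically reduced lengths forces $|m|=1$; then either $b_1$ centralises $w_B$ (impossible, since $b_1\notin\langle w_B\rangle$ by a glance at exponent sums) or $w_B$ is conjugate to $w_B^{-1}$ (impossible in a free group). Hence $g\in B\setminus C\subseteq G_w\setminus H$, and I note in passing that $H=A$ has infinite index in $G_w$, so this will also answer the refined question carrying Agol's infinite-index hypothesis.

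Next I would dispose of the two families in turn. For retraction-type maps, observe that the twist $\alpha$ is supported on $B$ (one has $\alpha|_A=\id$, $\pi|_A=\id$), so $\pi\circ\alpha$ restricts to the identity on $A$; consequently $(\pi\circ\alpha)(G_w)\subseteq A=(\pi\circ\alpha)(H)$, and in particular $(\pi\circ\alpha)(g)\in(\pi\circ\alpha)(H)$. Applying $\theta$ gives $\vp(g)\in\vp(H)$. For abelian-type maps, since $w\in[F_2,F_2]$ by Remark \ref{bem} we have $\eta(w_B)=1$, whence $\eta(g)=\eta(b_1)^{-1}\eta(w_B)\eta(b_1)=1$ and thus $\vp(g)=\psi(1)=1\in\vp(H)$. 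As every homomorphism from $G_w$ to a free group is of one of these two types, $\vp(g)\in\vp(H)$ holds unconditionally, which is exactly what is needed.

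The only genuinely delicate point is the choice of $g$, and this is where I expect the main obstacle. The naive candidate $g=b_1$ with $H=A$ is killed by all retraction-type maps but survives the abelian-type ones, since $\eta$ keeps the two free factors of $\Z^2\ast\Z^2$ apart and so does not send $\eta(b_1)$ into $\langle\bar a_1,\bar a_2\rangle$. The resolution is to pick a $g$ that is honestly outside $A$ yet becomes trivial under $\eta$: conjugating the relator $w_B$ by a $B$-generator does precisely this, because $w_B$ lies in the commutator subgroup (forcing $\eta(g)=1$) while the conjugation pushes it off the amalgamated edge group $C$ (forcing $g\notin A$). Taking $H=A$ rather than a smaller subgroup is the companion convenience: because $\pi\circ\alpha$ maps all of $G_w$ into $A$ and fixes $A$ pointwise, the Dehn twist of Theorem \ref{MR} is entirely invisible to the containment $\vp(g)\in\vp(H)$.
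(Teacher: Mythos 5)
Your proof is correct and follows essentially the same route as the paper: both take $H=A$ and use Theorem \ref{MR} to split $\Hom(G_w,F)$ into the retraction type (defeated because $\pi\circ\alpha$ maps everything into $A$) and the abelian type (defeated because the witness dies under $\eta$). The only difference is cosmetic: the paper's witness is $g=[b_1,b_2]$, which is killed by $\eta$ since $\eta(B)\cong\Z^2$ is abelian, whereas your $g=b_1^{-1}w_Bb_1$ is killed because $w\in[F_2,F_2]$; both verifications that $g\notin A$ and the rest of the argument are interchangeable.
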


\begin{proof}
Let $g=[b_1,b_2]\in G_w$. Then clearly $g\notin A$ and we claim that $g$ cannot be separated from $A$ in any free quotient. Indeed assume there exists a homomorphism $\vp:G_w\to F$, where $F$ is a non-abelian free group, such that $\vp(g)\notin\vp(A)$. By Theorem \ref{MR}, after possibly precomposing with a Dehn-twist along $\langle w\rangle$, $\vp$ factors through (at least) one of the homomorphisms $\pi$ or $\eta$. Since $\eta(B)\cong\Z^2$ it follows that $\vp(g)=1\in\vp(A)$ if $\vp$ factors through $\eta$. Now assume that $\vp$ factors through $\pi$. But then there exists a Dehn-twist $\alpha$ and a monomorphism $\Psi: A\to F$ such that $\vp(g)=\Psi\circ\pi\circ\alpha(g)\in \Psi\circ\pi(B)=\Psi\circ\pi(A)=\vp(A)$.
\end{proof}

\newpage

\section{JSJ-decompositions of doubles of a free \\ group of rank 2}
Unfortunately we are not able to give a description of all possible Makanin-Razborov diagrams of general doubles of free groups, but at least we are able to classify all possible JSJ decompositions of these doubles. This is the first step towards the construction of more examples of MR diagrams or even to the possible classification of all MR diagrams of doubles of free groups of rank two in the future.\\

In this section we will give a description of the possible JSJ decompositions of (one-ended) doubles of a free group in two generators  (see Theorem \ref{main}).\\
Recall that $G_w$ is the double of $F_2=\langle x_1,x_2\rangle$ along $\Z$ given by the embedding $\iota:\Z\to F_2, 1\mapsto w$, for some $w\in F_2$.
By Lemma \ref{one-ended} $G_w$ is one-ended if and only if $w$ is not contained in a free factor of $F_2$. Therefore if $G_w$ is not one-ended then $G_w$ has a free decomposition of the form
$$G_w\cong\langle y\rangle\ast\langle x\rangle\ast_{\langle w=x^n\rangle}\langle x\rangle\ast\langle y\rangle$$
for some basis $\{x,y\}$ of $F_2$. Hence from now on we will only be interested in one-ended doubles $G_w$ such that in addition $w$ is not a proper power (and therefore $G_w$ is a limit group).\\
First we give some necessary and  sufficient conditions on $w$ such that a JSJ decomposition of $G_w$ is as simple as possible, i.e. it is just the double decomposition $A\ast_CB$.\\
Later we describe JSJ decompositions of $G_w$ in the case that at least one of these conditions is violated.
In the following Proposition we collect some results which will be of use later.

\begin{prop}\label{reference}
Let $G\cong F_2$ be a non-abelian free group on two generators.
\begin{enumerate}[(1)]
\item \cite{solitar} $G$ does not split as a proper amalgamated product along a non-trivial malnormal subgroup $C$.
\item \cite{kapovichweidmann} (Proposition 3.7) Suppose $G$ splits as a non-trivial amalgamated product $A\ast_CB$ along $C\cong\Z$. Then there exist $x,y\in G$ and $n>1$, such that $G=\langle x,y\rangle$, $A=\langle x\rangle\cong\Z$, $B=\langle x^n,y\rangle\cong F_2$ and $A\ast_CB=\langle x\rangle\ast_{\langle x^n\rangle}\langle x^n,y\rangle$. In particular $C$ is malnormal in $B$.
\item \cite{kapovichweidmann} (Lemma 3.6, Proposition 3.8) Suppose $G$ splits as an HNN extension $H\ast_C$ where $C\cong\Z$. Then $G=\langle H, t\ |\ ta^nt^{-1}=b\rangle$ for some $n\geq 1$ and elements $a,b\in G$ with no roots. Moreover $\langle a\rangle$ and $\langle b\rangle$ are malnormal in $H$ and there exist $h\in H$ such that $G=\langle ht,a\rangle$ and $H=\langle a,hbh^{-1}\rangle=\langle a,(ht)a^n(ht)^{-1}\rangle.$
\end{enumerate}
\end{prop}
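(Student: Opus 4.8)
The three statements are standard structural facts about splittings of $G \cong F_2$, and the plan is to extract the numerical constraints uniformly from Bass--Serre theory together with additivity of the Euler characteristic, and then invoke the cited refinements of Solitar and of Kapovich--Weidmann for the finer claims. Recall that a free group of rank $k$ has $\chi = 1-k$, that every subgroup of $F_2$ is again free, and that $\chi$ is additive over a graph-of-groups decomposition: $\chi(A\ast_C B) = \chi(A)+\chi(B)-\chi(C)$ and $\chi(H\ast_C) = \chi(H)-\chi(C)$. Since $\chi(C)=0$ whenever $C\cong\Z$, these identities immediately pin down the ranks of the vertex groups, which is the common first step for all three parts.

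For (1), I would first show that a non-trivial malnormal edge group must be cyclic. Writing $r=\rk C\geq 1$, additivity gives $\rk A+\rk B = 2+r$; since $C$ embeds in each factor we have $\rk A,\rk B\geq r$, whence $r\leq 2$. When $r=1$ the rank equation forces one factor, say $A$, to be infinite cyclic; but every non-trivial subgroup of $A\cong\Z$ is normal, so it fails malnormality in $A$ (hence in $G$) unless it equals $A$, in which case $A\ast_C B=B$ is improper, a contradiction either way. The remaining case $r=2$, where both factors and $C$ have rank two with $C$ of infinite index, is precisely the situation excluded by the cited theorem of Solitar \cite{solitar}, which I would invoke directly.

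For (2), the same computation with $C\cong\Z$ gives $\rk A+\rk B=3$, and since a proper amalgam has both factors containing $C$ we obtain $\{\rk A,\rk B\}=\{1,2\}$; say $A\cong\Z$ and $B\cong F_2$. Writing $A=\langle x\rangle$, the edge group sits as $C=\langle x^n\rangle$, and properness forces $n>1$, since $n=1$ would give $C=A$ and a trivial splitting. It then remains to produce a basis $\{x,y\}$ of $G$ with $B=\langle x^n,y\rangle$ and to verify that $C$ is malnormal in $B$; this is exactly Proposition 3.7 of \cite{kapovichweidmann}, obtained there by a Nielsen/folding analysis of the embedding of $C$ into $B$, and I would cite it rather than reprove it.

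For (3), additivity for the HNN extension gives $\rk H=2$, so $H\cong F_2$, and the two associated subgroups are cyclic, of the form $\langle a^n\rangle$ and $\langle b\rangle$ with $a,b$ without proper roots. The explicit normal form $G=\langle H,t\mid ta^nt^{-1}=b\rangle$, the malnormality of $\langle a\rangle$ and $\langle b\rangle$ in $H$, and the description $H=\langle a,hbh^{-1}\rangle$ with $G=\langle ht,a\rangle$ are precisely Lemma 3.6 and Proposition 3.8 of \cite{kapovichweidmann}. I expect the main obstacle to be exactly these refined structural statements in (2) and (3): the Euler-characteristic bookkeeping fixes the ranks with no difficulty, but extracting the canonical generating sets and establishing malnormality of the edge groups requires the detailed Nielsen-theoretic argument of Kapovich--Weidmann, which is the substantive input and which I would quote rather than redo.
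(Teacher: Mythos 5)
Your handling of (2) and (3) is fine and is essentially what the paper itself does: the paper gives no proof of this proposition at all (it is introduced with ``we collect some results which will be of use later''), so the entire content rests on the citations to \cite{kapovichweidmann}, and your Euler-characteristic bookkeeping is a correct, if redundant, preamble to those citations. It is legitimate there because in (2) and (3) the edge group is $\Z$ by hypothesis, hence the vertex groups of the splitting are finitely generated and additivity of $\chi$ applies.

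Your treatment of (1), however, contains a step that is simply false. You argue that since $C$ embeds in each factor, $\rk A,\rk B\geq \rk C$, whence $\rk C\leq 2$. Subgroups of free groups can have rank strictly larger than that of the ambient group: an index-$n$ subgroup of $F_2$ has rank $n+1$ by Nielsen--Schreier, and the commutator subgroup of $F_2$ is not even finitely generated. So containment of $C$ in $A$ and $B$ gives no bound whatsoever on $\rk C$, your reduction to $\rk C\leq 2$ collapses, and the cases $\rk C\geq 3$ are left unhandled, as is the case of an infinitely generated malnormal $C$ -- for which the additivity formula you invoke is not even defined, and which statement (1) does not exclude, since it makes no finite-generation assumption. (Splittings of finite-rank free groups over infinitely generated subgroups are a real phenomenon, e.g.\ $F_2$ is an HNN extension whose base and associated subgroups are the infinitely generated group $\ker(F_2\to\Z)$, so such cases cannot be dismissed without argument.) The repair is to do exactly what the paper does: the theorem of Karrass and Solitar \cite{solitar} applies to an arbitrary non-trivial proper malnormal amalgamated subgroup, with no rank or finiteness restriction, so it should be quoted for the whole of statement (1) at once. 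Your framing, which presents \cite{solitar} as covering ``precisely'' the rank-two situation, both misrepresents the generality of that theorem and leaves your own proof dependent on the invalid rank reduction.
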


We say that a group $G$ is obtained by \textit{adjoining a root to $x$} (or \textit{pulling out a root of $x$}) if $G$ has a decomposition as a graph of groups of the form $G=\langle x\rangle\ast_{\langle x^n\rangle}H$.

\begin{prop}\label{jsj}
Let $F_2=\langle x_1,x_2\rangle$ be a free group, $w\in F_2$ an element which is not a proper power and $G_w$ the double of $F_2$ along $w$.
Then the graph of groups given by this decomposition is a JSJ decomposition of $G$ if none of the following holds:
\begin{enumerate}[(1)]
\item $w$ is contained in a subgroup generated by $\{xyx^{-1},y\}$ for some basis $\{x,y\}$ of $F_2$
\item $w$ is contained in a subgroup generated by $\{x^n,y\}$ for some basis $\{x,y\}$ of $F_2$ and some $n\geq 2$.
\end{enumerate}
\end{prop}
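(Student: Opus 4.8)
The plan is to verify directly that the Bass--Serre tree $T$ of the double decomposition $G_w=A\ast_C B$ satisfies the two defining properties of a JSJ tree, after first translating the hypothesis ``neither (1) nor (2)'' into a rigidity statement about the factors.

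First I would reduce the hypothesis to rigidity. Since $C=\langle w_A\rangle$ with $w_A=\iota_A(c)$, Proposition \ref{reference} classifies the one-edge cyclic splittings of $A\cong F_2$: every amalgamated splitting has a vertex group of the form $\langle x^n,y\rangle$ (part (2)), and every HNN splitting has vertex group $\langle a,ta^nt^{-1}\rangle$ (part (3)), which for $n=1$ is $\langle xyx^{-1},y\rangle$. Thus condition (2) says precisely that $w$ is conjugate into a vertex group of an amalgamated splitting, and condition (1) that $w$ is conjugate into the vertex group of an HNN splitting; an HNN splitting with $n\geq 2$ refines by pulling out a root of $a$, after which $w$ being elliptic places it in an amalgam vertex group, i.e.\ back in case (2). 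Together with Lemma \ref{one-ended} (so that $w$ lies in no free factor, killing all essential splittings with trivial edge group), the negation of (1) and (2) is therefore equivalent to the statement that \emph{$A$ is rigid relative to $C$}, meaning $A$ is elliptic in every simplicial $A$-tree with trivial or infinite cyclic edge stabilizers in which $C$ is elliptic. By symmetry the same holds for $B$.

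Next I would dispose of the domination property and set up the main point. Because $w$ is not a proper power, $C$ is malnormal in $A$ and in $B$, so the action of $G_w$ on $T$ is $2$-acylindrical (and in fact $C$ is malnormal in $G_w$). Given any $\mathcal{Z}$-tree $T'$ in which $C$ is elliptic, I would project the $C$-fixed point to the minimal $A$-subtree $T'_A$: since $C\leq A$ preserves $T'_A$, the projection is $C$-fixed, so $C$ is elliptic in $T'_A$; rigidity of $A$ then forces $A$ to be elliptic in $T'_A$, i.e.\ $T'_A$ is a point and $A$ is elliptic in $T'$. The same argument gives $B$ elliptic. Hence every subgroup elliptic in $T$ (a conjugate of a subgroup of $A$ or of $B$) is elliptic in such a $T'$. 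Consequently, once I know that $C$ is universally elliptic, property (a) holds and, since then $C$ is elliptic in \emph{every} $\mathcal{Z}$-tree, the projection argument makes $T$ dominate every $\mathcal{Z}$-tree, in particular every universally elliptic one. Thus $T$ is a JSJ tree.

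The hard part is the universal ellipticity of $C$: that $w$ is elliptic in every $\mathcal{Z}$-tree $T'$. The mechanism I would use is the following. Suppose $w$ were hyperbolic in $T'$ with axis $L$. For an edge $e\subset L$ whose $G_w$-stabilizer $Z_e$ meets both $A$ and $B$ nontrivially, the acylindricity of $T$ yields (by the standard fact that an infinite cyclic subgroup meeting both $A$ and $B$ is conjugate into $C$) that $Z_e=g\langle w^k\rangle g^{-1}$ for some $g$ and some $k\neq 0$; but then $\langle w^k\rangle$ fixes the edge $g^{-1}e$, contradicting that $w^k$, as a nonzero power of the hyperbolic element $w$, fixes no edge. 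The delicate point, which I expect to be the real obstacle, is that a priori the \emph{induced} actions of $A$ and $B$ on $T'_A$ and $T'_B$ may have trivial edge stabilizers along $L$, so an edge $e$ of the required kind need not obviously exist; one must rule out that $w$ becomes hyperbolic with its axis running only through edges whose stabilizers avoid $A$ and $B$. I would resolve this using the acylindrical accessibility of the one-ended limit group $G_w$ together with the assembly of relative JSJ decompositions over malnormal cyclic edge groups (Cashen's relative JSJ of $F_2$ relative to $\langle w\rangle$, which is trivial by the rigidity established above, glued via the results of Guirardel--Levitt), which together force $C$ to be universally elliptic and complete the proof.
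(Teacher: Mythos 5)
Your first two steps are sound, and in fact they reproduce essentially all of the written content of the paper's own proof: the paper quotes from JSJ theory the dichotomy that either $A\ast_{\langle w\rangle}B$ is already a JSJ decomposition or $A$ (resp.\ $B$) admits a $\Z$-splitting relative to $w$, and then devotes its effort to ruling out relative amalgam splittings via Proposition \ref{reference}(2) (contradicting the negation of (2)) and relative HNN splittings via Proposition \ref{reference}(3), whose vertex group $\langle a,(ht)a^n(ht)^{-1}\rangle$ lies in a subgroup of the form $\langle y,xyx^{-1}\rangle$ for the basis $x=ht$, $y=a$ (contradicting the negation of (1)). Your translation of (1) and (2) into relative rigidity of $A$ and $B$ is the same computation, and your domination argument, \emph{conditional} on universal ellipticity of $C$, is correct.

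The gap is exactly where you locate it, and it is genuine: you never prove that $C=\langle w\rangle$ is elliptic in every $\mathcal{Z}$-tree of $G_w$, and the proposed repair does not work as stated. Acylindricity (malnormality of $\langle w\rangle$ in each factor) cannot by itself give universal ellipticity: for $w=[x_1,x_2]$ the double is the genus-$2$ surface group, $\langle w\rangle$ is malnormal in both factors, yet $w$ is hyperbolic in the splitting along any non-separating simple closed curve. So hypotheses (1)/(2) must enter the proof of universal ellipticity itself, and the mechanism by which they enter is the Rips--Sela/Guirardel--Levitt theory of flexible vertices: for a one-ended finitely presented group, a cyclic splitting that is not universally elliptic crosses another cyclic splitting, and crossing splittings are enclosed in a QH surface vertex; $w$ would then be a two-sided, non-boundary-parallel simple closed curve, forcing $A$ to be the fundamental group of a one-holed torus or one-holed Klein bottle with $w$ as boundary, i.e.\ $w$ conjugate to $[x,y]$ or to a word in $\langle x^2,y\rangle$, violating (1) or (2). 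Neither of the tools you invoke supplies this: Cashen's relative JSJ of $F_2$ relative to $\langle w\rangle$ only concerns trees of $F_2$ in which $w$ is elliptic, whereas the danger is a tree of $G_w$ in which $w$ is hyperbolic; and the Guirardel--Levitt statements that assemble a JSJ of $G_w$ by refining $A\ast_C B$ with relative JSJ's of the vertex groups take universal ellipticity of $C$ as a hypothesis, which is precisely what is to be shown. To close the gap you must either run the QH/crossing argument sketched above or, as the paper does, cite the dichotomy from \cite{guirardel} directly.
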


\begin{proof}
Assume that $w$ does neither satisfy property $(1)$ nor property $(2)$.
Property (2) implies in particular that $w$ is not a power of a basis element in $F_2$. Hence it follows from Lemma \ref{one-ended} that $G_w$ is one-ended.\\
JSJ-theory implies that either $A\ast_{\langle w\rangle}B$ is a JSJ decomposition of $G_w$ or there exists a $\Z$-splitting of $A$ (respectively $B$) relative $w$.
Therefore it suffices to show that $A$ and $B$ do not admit any further splittings along $\Z$ relative to $\langle w\rangle$.\\
First suppose that $A=R_1\ast_D R_2$ is a non-trivial relative splitting, where $D\cong\Z$.
From Proposition \ref{reference} (2) follows the existence of $r,h\in A$ and $n>1$, such that 
$$F_2\cong A=R_1\ast_D R_2= \langle r\rangle\ast_{\{r^n=b\}} \langle b,h\rangle=\langle r\rangle\ast_{\{r^n=r^n\}}\langle r^n,h\rangle,$$
in particular $A=\langle r,h\rangle$.\\
Since $w$ is not contained in any subgroup generated by $\{x^n,y\}$ for some basis $\{x,y\}$ of $A$ and some $n>1$ this implies that $w\notin R_1, R_2$, a contradiction.\\
Now suppose that $A$ splits as an HNN-extension $R\ast_{\Z}$ relative $w$.
From Proposition \ref{reference} (3) follows that $A=\langle R, t\ |\ ta^nt^{-1}=b\rangle$ for some $n\geq 1$ and elements $a,b\in A$. Moreover there exist $h\in R$ such that $A=\langle ht,a\rangle$ and $R=\langle a,(ht)a^n(ht)^{-1}\rangle.$
Since by assumption $w\notin \langle x,yxy^{-1}\rangle$ for every basis $\{x,y\}$ of $A$, this implies that $w\notin R$.
\end{proof}

We now describe when $G_w$ is a surface group, in which case the JSJ decomposition of $G_w$ consists of a single vertex with QH vertex group $G_w$.

\begin{prop}
Let $G_w$ be the double of $F_2=\langle x_1,x_2\rangle$ along $w$. Then $G_w$ is a surface group if and only if one of the following holds:
\begin{enumerate}[(1)]
\item $w$ is conjugate to $[x_1,x_2]^{\pm1}$ in which case $G_w$ is the fundamental group of an orientable surface of genus 2.
\item $w$ is either conjugate to $x_1^{2\epsilon_1}x_2^{2\epsilon_2}$, where $\epsilon_1,\epsilon_2\in\{\pm1\}$, to $(x_1x_2x_1^{-1}x_2)^{\pm1}$, or to $(x_1x_2x_1x_2^{-1})^{\pm1}$ in which case $G_w$ is the fundamental group of a non-orientable surface of genus 4.
\end{enumerate} 
\end{prop}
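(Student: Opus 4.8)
The plan is to read the double $G_w = A \ast_C B$ geometrically, as the closed surface obtained by gluing two copies of a single punctured surface along its boundary, and to combine this with the fact that cyclic splittings of closed surface groups are induced by simple closed curves. I would first record two reductions. Computing the Euler characteristic of the graph of groups gives $\chi(G_w) = \chi(A) + \chi(B) - \chi(C) = -1 - 1 - 0 = -2$, so if $G_w$ is a closed surface group it is either $\pi_1(\Sigma_2)$, the orientable surface of genus $2$, or $\pi_1(N_4)$, the non-orientable surface of genus $4$, these being the only closed surfaces with $\chi = -2$. Second, the isomorphism type of $G_w$ depends only on the class of $w$ under $\Aut(F_2)$ and conjugacy: an automorphism $\phi$ of $F_2$ carrying $w$ to a conjugate of $w'$ extends, acting on both free factors, to an isomorphism $G_w \cong G_{w'}$.

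For the implication that the listed words produce surface groups it then suffices to treat one representative of each orbit. If $w = [x_1,x_2]$, then $G_w = \langle a_1,a_2,b_1,b_2 \mid [a_1,a_2] = [b_1,b_2]\rangle$ is the standard genus-two presentation, so $G_w \cong \pi_1(\Sigma_2)$. If $w = x_1^2 x_2^2$, then after the substitution $c_1 = a_1,\, c_2 = a_2,\, c_3 = b_2^{-1},\, c_4 = b_1^{-1}$ the relator $a_1^2 a_2^2 b_2^{-2} b_1^{-2}$ becomes $c_1^2 c_2^2 c_3^2 c_4^2$, so $G_w \cong \pi_1(N_4)$. The remaining words of (2) lie in the $\Aut(F_2)$-orbit of $x_1^2 x_2^2$ (for instance $x_1 x_2 x_1 x_2^{-1} = a^2 b^2$ in the basis $a = x_1 x_2,\ b = x_2^{-1}$, and the sign changes $x_i \mapsto x_i^{-1}$ are automorphisms), so by the second reduction their doubles are again $\pi_1(N_4)$.

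For the converse, assume $G_w \cong \pi_1(S)$ with $S \in \{\Sigma_2, N_4\}$. The double decomposition is a splitting of the closed surface group $\pi_1(S)$ over the cyclic subgroup $C = \langle w\rangle$, and I would invoke the classical fact that every $\Z$-splitting of a closed surface group is geometric, i.e. carried by an essential simple closed curve $\gamma$; since the splitting is an amalgamated product rather than an HNN extension, $\gamma$ is separating and cuts $S$ into subsurfaces $S_1, S_2$ with $\pi_1(S_1) \cong A$, $\pi_1(S_2) \cong B$ and peripheral subgroup $\langle\gamma\rangle = C$. Each $S_i$ is then a compact surface with a single boundary component and free fundamental group of rank $2$, hence $\chi(S_i) = -1$; by the classification of surfaces $S_i$ is a once-punctured torus, with boundary word $[a,b]$, or a once-punctured Klein bottle, with boundary word $a^2 b^2$. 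Reading off the $S_1$-side shows $w$ is, up to an automorphism of $F_2$ and conjugacy, either $[x_1,x_2]$ or $x_1^2 x_2^2$; these two orbits are disjoint, since $x_1^2 x_2^2$ is nontrivial in the abelianization while $[x_1,x_2]$ is not, so the orbit of $w$ determines the type of piece, both sides agree, and $S$ is the double of a single $S_0$ — giving $S = \Sigma_2$ in the torus case and $S = N_4$ in the Klein-bottle case, matching the stated genera.

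It remains to list the conjugacy classes inside these two orbits in the fixed basis. For the commutator this is classical: every automorphism of $F_2$ sends $[x_1,x_2]$ to a conjugate of $[x_1,x_2]^{\pm1}$, which is exactly case (1). For $x_1^2 x_2^2$ I would run Whitehead's algorithm, computing the orbit of shortest representatives together with their conjugates, to obtain the complete list $x_1^{2\epsilon_1}x_2^{2\epsilon_2}$, $(x_1x_2x_1^{-1}x_2)^{\pm1}$ and $(x_1x_2x_1x_2^{-1})^{\pm1}$ of case (2). The main obstacle is the geometricity step of the converse: passing from the abstract algebraic $\Z$-splitting to an honest simple closed curve on $S$ is what licenses the entire surface-classification argument, and this is the point I would treat most carefully, citing the structure theory of splittings of surface groups. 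The secondary technical hurdle is the Whitehead computation for $x_1^2 x_2^2$ — a finite but delicate check needed to verify that case (2) is both complete and irredundant, in particular to see why exactly the forms $(x_1x_2x_1^{\pm1}x_2^{\mp1})^{\pm1}$ occur and no further classes do.
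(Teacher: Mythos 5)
Your proposal is correct and takes essentially the same route as the paper's proof: reduce to $\Sigma_2$ or $N_4$, use the fact that the cyclic splitting of the closed surface group is carried by an essential (separating) simple closed curve to put $w$ in standard form in some basis, and then transfer to the fixed basis $\{x_1,x_2\}$ via the invariance up to conjugation of the two word sets under $\Aut(F_2)$. Your write-up simply makes explicit the steps the paper compresses into single assertions — the Euler characteristic count, the explicit doubled presentations for the "if" direction, and the Whitehead/Nielsen computation behind the invariance of the sets.
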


\begin{proof}
If $G_w$ is the fundamental group of a closed surface $\Sigma$, then either $\Sigma$ is orientable of genus $2$ or non-orientable of genus $4$. In both cases there exists a basis of $F_2$ such that the curve corresponding to $w$ is conjugate to one of the specified words.
The claim now follows immediately from the fact that the sets $$\{[x_1,x_2]^{\pm1}\}$$ and $$\{x_1^{2\epsilon_1}x_2^{2\epsilon_2}, (x_1x_2x_1^{-1}x_2)^{\pm1},  (x_1x_2x_1x_2^{-1})^{\pm1}\ |\ \epsilon_1,\epsilon_2\in\{\pm 1\}\}$$ are up to conjugation invariant under Nielsen transformations of $F_2$.
\end{proof}

It remains to consider the case that $G_w$ is neither a surface group, nor that the double decomposition of $G_w$ is already a JSJ decomposition.

\begin{satz}\label{main}
Let $G_w$ be the double of $F_2=\langle x_1,x_2\rangle$ along $w$ and assume that $w$ is not a proper power and not contained in a free factor of $F_2$. Suppose moreover that $G_w$ is not a surface group.
\begin{enumerate}[(1)]
\item If $w$ satisfies property $(2)$ from Proposition \ref{jsj}, but not property (1), then a JSJ decomposition $\A$ of $G_w$ has one of the following forms:
\begin{enumerate}[(a)]
\item $\A$ is as in Figure \ref{QH3}.
\item $\A$ is as in Figure \ref{QH3} but the vertices stabilized by $\langle x\rangle$ together with their adjacent edges are replaced by M\"obius strips which are glued along their boundaries to the $4$-punctured sphere. Moreover $m>2$.
\item $\A$ has only rigid vertices and is one of the graphs of groups which we get by refining the vertices $A$ and $B$ in $A\ast_CB$ by one of the following two graphs of groups:
\begin{itemize}
\item $\langle x^n,y\rangle\ast_{\langle x^n\rangle}\langle x\rangle$
\item $\langle y\rangle\ast_{\langle y^m\rangle}\langle x^n,y^m\rangle\ast_{\langle x^n\rangle}\langle x\rangle$.
\end{itemize}
\end{enumerate}
\item If $w$ satisfies property $(1)$ from Proposition \ref{jsj}, but not property $(2)$, then a JSJ decomposition of $G_w$ is the graph of groups which we get by substituting either $\langle xyx^{-1},y\rangle\ast_{\langle y\rangle}$ or one of the three graphs of groups in Figure \ref{HNN-Fall} for $A$ and $B$, or the graph of groups in Figure \ref{QH4}.
\item If $w$ satisfies $(1)$ and $(2)$ from Proposition \ref{jsj} then a JSJ decomposition of $G_w$ is one of the graphs of groups which we get by refining the vertices $A$ and $B$ in $A\ast_CB$ by one of the graphs of groups in Figure \ref{3-Fall}, or the graph of groups in Figure \ref{QH4} with $\gcd(m,n)>1$, or the graph of groups in Figure \ref{QH5}.
\end{enumerate}
\end{satz}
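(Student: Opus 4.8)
The plan is to reduce the computation to the relative cyclic JSJ of the free factor $A\cong F_2$ relative to the amalgamating subgroup $\langle w\rangle$, and then to reassemble the two mirror copies across the edge $C$. First I would verify that the double splitting $G_w=A\ast_{\langle w\rangle}B$ is itself universally elliptic: since $w$ is not a proper power, $\langle w\rangle$ is malnormal in $F_2$, and as $G_w$ is one-ended but not a surface group the edge group $C$ is elliptic in every $\mathcal Z$-tree of $G_w$. Consequently every JSJ tree of $G_w$ refines the double splitting, and — exactly as in the proof of Proposition \ref{jsj} — a cyclic splitting of $G_w$ restricts to cyclic splittings of $A$ and of $B$ in which $w$ remains elliptic. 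Because $G_w$ is a double, the relative JSJ of $B$ relative to $\langle w\rangle$ is the mirror image of that of $A$, so the whole problem becomes: determine the cyclic JSJ of $F_2$ relative to $\langle w\rangle$, and describe how the two copies glue along $C$.

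Next I would classify the elementary relative $\mathcal Z$-splittings of $A$ by feeding Proposition \ref{reference} back in. An amalgamated splitting relative to $w$ forces, by Proposition \ref{reference}(2), a decomposition $A=\langle x\rangle\ast_{\langle x^n\rangle}\langle x^n,y\rangle$ with $w\in\langle x^n,y\rangle$, i.e.\ property $(2)$ (root-pulling); an HNN splitting relative to $w$ forces, by Proposition \ref{reference}(3), a decomposition with base $\langle xyx^{-1},y\rangle$ and edge group $\langle y\rangle$ containing $w$, i.e.\ property $(1)$. Thus the hypotheses of the three cases correspond precisely to which of these two moves is available. I would then assemble the relative JSJ of $A$ by iterating the available moves until no vertex group carrying $w$ admits a further relative splitting, the terminal rigid pieces being exactly those to which Proposition \ref{jsj} applies; Cashen's existence of a canonical relative cyclic JSJ of $F_2$ guarantees that this process terminates and exhausts the deformation space.

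The heart of the argument is the behaviour at the edge $C$ upon doubling. I would show that a QH vertex appears in $G_w$ precisely when $w$ occurs as a boundary word of a subsurface in the relative JSJ of $A$: gluing the surface-with-boundary $\Sigma_A$ to its mirror $\Sigma_B$ along the curve $w$ closes them up into a single QH vertex of $G_w$ (the punctured sphere of Figure \ref{QH3}), while the root-carrying vertices $\langle x\rangle$ survive as rigid vertices attached along $\langle x^n\rangle$. This produces the pictures of case $(1)(a)$ from property $(2)$, the HNN refinements and Figure \ref{QH4} of case $(2)$ from property $(1)$, and the combined graphs of case $(3)$ when both moves are present. The orientable-versus-non-orientable alternatives — in particular the replacement of an $\langle x\rangle$-vertex by an attached M\"obius strip in case $(1)(b)$ — come from the standard identification of a square-root edge $\langle x\rangle\ast_{\langle x^2\rangle}$ with a M\"obius band whose boundary is the squared core curve; tracking which root indices equal $2$ and which exceed it is what forces the constraint $m>2$ and the condition $\gcd(m,n)>1$ in Figure \ref{QH4}.

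The main obstacle I anticipate lies in the completeness and orientability bookkeeping of the last step: exhibiting one JSJ form in each case is routine, but the theorem asserts that the listed graphs exhaust the deformation space $\mathcal D_{JSJ}$. The delicate points are (i) ruling out spurious further splittings of the terminal rigid factors, using Proposition \ref{reference} together with the hypothesis that $G_w$ is not a surface group (which is exactly what prevents $\Sigma_A\cup_w\Sigma_B$ from closing into the whole group and collapsing the edge $C$); (ii) showing that the only remaining freedom is the redistribution of root edges and the cone-point/M\"obius reinterpretation, so that no further deformation-inequivalent trees arise; and (iii) correctly deciding orientability of each merged subsurface — equivalently, whether $w$ reads as a product of commutators or involves squares in the relevant free factor — which is precisely what separates the orientable figures from the non-orientable ones.
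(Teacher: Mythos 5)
Your opening step is false, and it fails in precisely the cases the theorem is really about. You claim that since $w$ is not a proper power the edge group $C=\langle w\rangle$ is elliptic in every $\mathcal{Z}$-tree of $G_w$, so that every JSJ tree refines the double splitting. Malnormality of $\langle w\rangle$ in $F_2$ is true, but it does not give universal ellipticity of $C$. Take $w=x^3y^3$: then $G_w$ is one-ended and not a surface group, and its JSJ decomposition is the graph of Figure \ref{QH3}, whose QH vertex group is the $4$-punctured sphere group $\langle x^3,y^3\rangle\ast_{\langle w\rangle}\langle x^3,y^3\rangle$. The curve corresponding to $w$ is an essential, non-boundary-parallel simple closed curve on that surface, so splitting $G_w$ along a transverse essential curve (for instance the one separating the two $x$-boundary components from the two $y$-boundary components) yields a $\mathcal{Z}$-splitting of $G_w$ in which $w$ acts hyperbolically. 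Hence the double splitting is not universally elliptic, and the JSJ tree does not refine it -- indeed $A$ itself is not elliptic in the tree of Figure \ref{QH3}. Your step 1, carried through, would wrongly exclude cases (1)(a), (1)(b) and the decompositions of Figures \ref{QH4} and \ref{QH5}. It also makes your proposal internally inconsistent: in your third paragraph the QH vertex of $G_w$ arises by collapsing the edge $C$ and gluing $\Sigma_A$ to its mirror $\Sigma_B$ along $w$, which cannot happen if $C$ is an edge of every JSJ tree. The paper avoids this trap by choosing, in each lemma, relative splittings of $A$ that explicitly do \emph{not} correspond to non-boundary-parallel closed curves on QH subgroups of $G_w$, and by treating the QH possibility separately at the end (deciding it by whether $w$ is conjugate to $(x^{n\epsilon_1}y^{m\epsilon_2})^{\pm1}$, respectively $(y^{\pm m}xy^{\pm n}x^{-1})^{\pm1}$).

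The second gap is that the part you yourself call ``the main obstacle'' -- exhaustiveness -- is exactly where all the mathematical content of the paper lies, and your proposal offers no argument for it. Appealing to Cashen's existence of a canonical relative JSJ of $F_2$ gives termination in principle but says nothing about \emph{which} graphs of groups occur, which is what the theorem asserts. In the paper, to rule out unexpected refinements such as $\langle e\rangle\ast_{\langle e^k\rangle}H\ast_{\langle x^n\rangle}\langle x\rangle$ with $H$ not of the form $\langle x^n,\bar{y}^k\rangle$, one needs: (the proof of) Theorem 1A of Kapovich--Weidmann \cite{weidmann2} on Nielsen equivalence of generating pairs acting on trees, an invariant-subtree/minimality argument (the $T_x\cup\bar{y}T_x$ picture) to kill the case where a basis element acts hyperbolically, and a homomorphism onto $\Z_k$ to kill the case where a basis element is conjugate into $H$; similar arguments recur in the HNN case and the mixed case, and they are also what force the arithmetic constraints ($m>2$, $\gcd(m,n)=1$ versus $\gcd(m,n)>1$) in the statement. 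None of these steps has a substitute in your outline, so as written the proposal is a plan whose two load-bearing steps are, respectively, incorrect and missing.
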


\noindent We split the proof of the theorem in several lemmas. From now on we assume that $w$ is not a proper power and not contained in a free factor of $F_2$. In particular $G_w$ is one-ended by Lemma \ref{one-ended}. Moreover we assume that $G_w$ is not isomorphic to the fundamental group of a closed surface.

\begin{lemma}\label{lemma1}
If $w\in\langle x^n,y\rangle$ for some  $n>1$ and some basis $\{x,y\}$ of $F_2$ and $w\notin\langle bab^{-1},a\rangle$ for any basis $\{a,b\}$ of $F_2$, then a JSJ decomposition $\A$ of $G_w$ has one of the following forms:
\begin{enumerate}[(a)]
\item $\A$ is as in Figure \ref{QH3}.
\item $\A$ is as in Figure \ref{QH3} but the vertices stabilized by $\langle x\rangle$ together with their adjacent edges are replaced by M\"obius strips which are glued along their boundaries to the $4$-punctured sphere. Moreover $m>2$.
\item $\A$ has only rigid vertices and is one of the graphs of groups which we get by refining the vertices $A$ and $B$ in $A\ast_CB$ by one of the following two graphs of groups:
\begin{itemize}
\item $\langle x^n,y\rangle\ast_{\langle x^n\rangle}\langle x\rangle$
\item $\langle y\rangle\ast_{\langle y^m\rangle}\langle x^n,y^m\rangle\ast_{\langle x^n\rangle}\langle x\rangle$.
\end{itemize}
\end{enumerate}
\end{lemma}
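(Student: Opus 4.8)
The plan is to assemble a JSJ decomposition of $G_w = A\ast_C B$ from the cyclic JSJ decompositions of the free vertex groups $A$ and $B$ taken \emph{relative} to the amalgamated edge group $C=\langle w\rangle$, which exist and are canonical by Cashen's theorem \cite{cashen} on relative JSJ decompositions of free groups. Whenever $C$ is universally elliptic, the JSJ of $G_w$ is obtained by refining the two vertices of the double decomposition by these relative decompositions and gluing along the $C$-edge, as in the general theory of \cite{guirardel}; the point at which $C$ \emph{fails} to be universally elliptic is precisely what produces the surface cases below. By the symmetry of the doubling it suffices to analyze $A$ relative to $\langle w_A\rangle$ and then transport the result to $B$ via the mirror isomorphism $a_i\mapsto b_i$.

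First I would enumerate every cyclic splitting of $A\cong F_2$ relative to $\langle w_A\rangle$. By Proposition \ref{reference}(2) each amalgamated $\Z$-splitting of $F_2$ is a root adjunction $\langle r\rangle\ast_{\langle r^n\rangle}\langle r^n,h\rangle$, and it is relative to $w$ exactly when $w\in\langle r^n,h\rangle$; by hypothesis such an adjunction exists for the given basis $\{x,y\}$ and exponent $n>1$. By Proposition \ref{reference}(3) any HNN splitting has vertex group $R=\langle a,(ht)a^n(ht)^{-1}\rangle$, and $w\in R$ would force $w\in\langle bab^{-1},a\rangle$ for some basis, exactly the configuration excluded by the second hypothesis (this is the HNN computation already carried out in the proof of Proposition \ref{jsj}). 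Hence every relative splitting of $A$ is a root adjunction, and the relative JSJ of $A$ is built from a maximal compatible family of these.

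Next I would sort the outcome according to the structure of the central vertex carrying $w$. If $w$ admits essentially one root adjunction, the relative JSJ of $A$ is $\langle x^n,y\rangle\ast_{\langle x^n\rangle}\langle x\rangle$ with rigid center; if $w$ also lies in some $\langle x^n,y^m\rangle$, one obtains the two-fold adjunction $\langle y\rangle\ast_{\langle y^m\rangle}\langle x^n,y^m\rangle\ast_{\langle x^n\rangle}\langle x\rangle$, again with rigid center. In either situation $w$ sits in a rigid vertex, so $\langle w\rangle$ is universally elliptic and doubling keeps every vertex rigid; this yields case (c). The alternative is that the central group $\langle x^n,y^m\rangle$ is the fundamental group of a surface carrying $w$ as a boundary curve. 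Since this group has rank two the surface can only be a pair of pants: a once-punctured torus or Klein bottle would have $w$ as its \emph{only} boundary, and doubling would make $G_w$ a closed surface group, which we have excluded. Gluing the two pairs of pants from $A$ and $B$ along the common boundary $w$ merges the two quadratically hanging vertices into a single four-punctured sphere, in which $w$ is now an \emph{interior} curve, and the remaining boundaries $x^n,y^m$ of each pair of pants are attached to root-adjunction vertices; this is Figure \ref{QH3}, case (a). In particular $\langle w\rangle$ is no longer universally elliptic here, which is exactly why the JSJ genuinely differs from the double decomposition. When an index equals $2$ the corresponding adjunction can be absorbed into a cross-cap, realizing the same deformation space by M\"obius strips glued to the four-punctured sphere and forcing the surviving exponent to satisfy $m>2$; this is case (b).

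The main obstacle is the \emph{completeness} of this trichotomy: one must verify that the listed graphs exhaust all relative JSJ decompositions, with no larger surface piece and no further refinement of the rigid centers. The surface bound follows from $\rk A=2$ as above, but pinning down exactly which words $w\in\langle x^n,y\rangle$ force the pair-of-pants (boundary-word) structure rather than a rigid center is a genuinely combinatorial question; it amounts to deciding, for each subgroup $\langle r^n,h\rangle$ containing $w$, whether $w$ is conjugate to a boundary word $x^ny^m$ and whether independent root adjunctions of $x$ and $y$ coexist. Here I would invoke the canonicity of Cashen's relative JSJ to guarantee that the maximal compatible family of root adjunctions, together with the (at most pair-of-pants) surface vertex, really constitutes the whole relative JSJ, and then read off the three families (a), (b), (c) after doubling.
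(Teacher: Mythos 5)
Your skeleton matches the paper's: reduce to the cyclic JSJ of $A\cong F_2$ relative to $\langle w\rangle$, rule out HNN splittings using the second hypothesis (exactly as in Proposition \ref{jsj}), and then obtain the trichotomy rigid one-fold adjunction / rigid two-fold adjunction / four-punctured sphere with M\"obius-strip variants when an exponent equals $2$. The problem is that the step you yourself flag as ``the main obstacle'' is precisely the mathematical content of the lemma, and your proposed way around it does not work. Appealing to the canonicity of Cashen's relative JSJ only gives you \emph{uniqueness} of the deformation space; it tells you nothing about \emph{which} graph of groups it is. To conclude that ``the maximal compatible family of root adjunctions'' is the whole relative JSJ and that the central vertex is rigid, you must prove that the vertex $\langle x^n,y\rangle$ admits no further amalgam refinement relative to $\langle x^n\rangle$ and $w$ unless $w\in\langle x^n,\bar y^m\rangle$ for some $m>1$ and some basis completion $\bar y$, and that (with $n,m$ maximal) $\langle x^n,y^m\rangle$ admits none at all. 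That is a statement about all possible splittings, not about the canonical one, and no general theorem in \cite{cashen} or \cite{guirardel} yields it for free.

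The paper fills this hole with a concrete argument you would need to reproduce or replace: given a putative refinement $\langle e\rangle\ast_{\langle e^k\rangle}H\ast_{\langle x^n\rangle}\langle x\rangle$, it applies (the proof of) Theorem 1A of \cite{weidmann2} to replace $\{x,y\}$ by a Nielsen-equivalent pair $\{x,\bar y\}$ adapted to the Bass--Serre tree $T_A$; the case where $\bar y$ acts hyperbolically is excluded by showing $T_x$ is a star (via malnormality of $\langle x^n\rangle$ in $H$ and conjugacy separation from $\langle e^k\rangle$) so that $\langle x,\bar y\rangle T_x$ would be a proper invariant subtree; the case where $\bar y$ is conjugate into $H$ is excluded by the quotient $F_2\to\Z_k$ killing $\langle x,\bar y\rangle$; and the remaining case forces, via an abelianization/inner-automorphism argument, $H=\langle x^n,\bar y^k\rangle$, i.e.\ exactly the passage into Case 2. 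Without this (or an equivalent analysis, e.g.\ an actual Whitehead-graph computation in Cashen's framework), your trichotomy is a plausible guess, not a proof. The same gap affects your rigid-versus-QH decision: the paper settles when the doubled central vertex can be QH by cutting the putative surface along $w$ and invoking the classification of compact surfaces (forcing $w$ conjugate to $(x^{n\epsilon_1}y^{m\epsilon_2})^{\pm1}$, or to lie in $\langle x^n,y^2\rangle$ in Case 1), whereas you leave it as an undecided ``combinatorial question.''
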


\begin{proof}
As in the proof of Proposition \ref{jsj} we conclude that there exists no splitting of $F_2$ as an HNN-extension relative to $w$, but there exists (at least) one splitting of $F_2$ as an amalgamated product relative to $w$. We assume without loss of generality that $w$ is cyclically reduced.\\
Let $n>1$ be maximal such that $\langle x^n,y\rangle\ast_{\langle x^n\rangle}\langle x\rangle$ is such a splitting of $F_2$ relative to $w$ which does not correspond to a splitting along a non-boundary parallel, closed curve on a QH subgroup of $G_w$ (such a splitting exists since by assumption $G_w$ is not a surface group).\\
\textit{Case 1}: Assume that $w\notin \langle x^n,z^m\rangle$ for any $m>1$ and any $z\in F_2$ such that $\{ x,z\}$ is a basis of $F_2$. Suppose that there exists a splitting as an amalgamated product of $\langle x^n,y\rangle$ relative to $w$ and $\langle x^n\rangle$. Hence by using Proposition \ref{reference} (2) we can refine $\langle x^n,y\rangle\ast_{\langle x^n\rangle}\langle x\rangle$ to say
$$\langle e\rangle\ast_{\langle e^k\rangle}H\ast_{\langle x^n\rangle}\langle x\rangle$$
for some $H\cong F_2$ and $k>1$. We denote this graph of groups by $\A$ and the corresponding Bass-Serre tree by $T_A$. We moreover identify $F(x,y)$ with $\pi_1(\A)$.\\
By (the proof of) Theorem $1A$ in \cite{weidmann2} $\{x,y\}$ is Nielsen equivalent to $\{x,\bar{y}\}$ such that either 
\begin{enumerate}[(a)]
\item $T_x\cap T_{\bar{y}}\neq\emptyset$ and $x$ and $\bar{y}$ act elliptically on $T_A$ or
\item $T_x\cap \bar{y}T_x\neq\emptyset$ and $\bar{y}$ acts hyperbolically and $x$ acts elliptically on $T_A$, 
\end{enumerate}
where $T_x:=\{v\in T_A\ |\ x^zv=v\text{ for some }z\in\Z\setminus \{0\}\}$ is the subtree of $T_A$ consisting of all points fixed by a non-trivial power of $x$.\\
Suppose we are in case $(b)$, i.e. $\bar{y}$ acts hyperbolically and $x$ acts elliptically on $T_A$ and $\langle x^n\rangle$ is by Proposition \ref{reference} (2) malnormal in $H$. 
Moreover $\langle x^n\rangle$ is conjugacy separated from $\langle e^k\rangle$, i.e. $g\langle e^k\rangle g^{-1}\cap\langle x^n\rangle=1$ for all $g\in F(x,y)$.
Therefore $T_x$ is the star of the vertex stabilized by $\langle x\rangle$ and all other vertices of $T_x$ are stabilized by conjugates of $H$, while the edges are stabilized by $\langle x^n\rangle$. Clearly $\bar{y}T_x$ is of the same form. Since we are in case (b) we have that $T_x\cap \bar{y}T_x\neq \emptyset$ (see Figure \ref{star}).
\begin{figure}[htbp]
\centering
\includegraphics[scale=1]{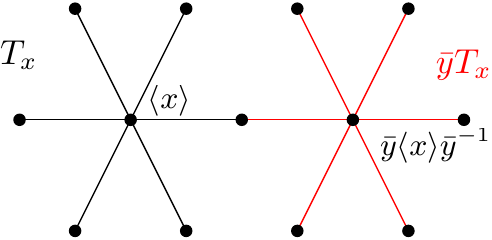}
\caption{$T_x\cup \bar{y}T_x$}
\label{star}
\end{figure}
But then $T:=\langle x,\bar{y}\rangle T_x\subset T_A$ is a $F(x,y)$-invariant subtree, a contradiction to the minimality of $\A$.
Therefore $\bar{y}$ cannot act hyperbolically on $T_A$ and we are in case (a), i.e. $T_x\cap T_{\bar{y}}\neq \emptyset$ and $\bar{y}$ acts elliptically on $T_A$.\\
Hence either $\bar{y}$ is conjugate into $\langle e\rangle$ or into $H$ (since $\bar{y}$ is not conjugate into $\langle x\rangle$). So suppose that $\bar{y}$ is conjugate to an element of $H$. But then mapping $H$, $\langle x\rangle$ to $0$ and $e$ to $1$ defines a non-trivial homomorphism $\vp:F(x,y)\to\Z_k$, such that $F(x,y)=\langle x,\bar{y}\rangle\subset \ker \vp$, a contradiction.\\
Therefore $\bar{y}$ is conjugate to an element of $\langle e\rangle$, hence since $\bar{y}$ is a basis element $\bar{y}=geg^{-1}$ for some $g\in F(x,y)$. Since $T_{\bar{y}}\cap T_x\neq \emptyset$ there exists an isomorphism of graphs of groups from 
$$\langle \bar{y}\rangle \ast_{\langle \bar{y}^k\rangle}\langle x^n,\bar{y}^k\rangle \ast_{\langle x^n\rangle}\langle x\rangle \text{ to } \langle e\rangle \ast_{\langle e^k\rangle}H\ast_{\langle x^n\rangle}\langle x\rangle.$$ In particular there exists an automorphism $\alpha\in \Aut(F_2)$ mapping $\langle \bar{y}^k,x^n\rangle$ onto $H$. But since the images of $e$ and $y=geg^{-1}$ in the abelianization of $F(x,y)$ are equal, this immediately implies that $\alpha$ is an inner automorphism and hence since it fixes $x$ we conclude that $g=x^m$ for some $m\in\N$. Therefore $H=\langle \bar{y}^k,x^n\rangle$ and $\langle \bar{y}\rangle \ast_{\langle \bar{y}^k\rangle}\langle x^n,\bar{y}^k\rangle \ast_{\langle x^n\rangle}\langle x\rangle$ is a splitting of $F(x,y)$ relative $w$, a contradiction to the assumption that $w\notin \langle x^n,\bar{y}^m\rangle$ for any $m>1$.\\
Therefore a JSJ decomposition of $G_w$ is a graph of groups which we get by refining both $A$ and $B$ in $A\ast_CB$ by $\langle x^n,y\rangle\ast_{\langle x^n\rangle}\langle x\rangle$ and all vertex groups are rigid. This is true since $\langle x^n,y\rangle\ast_{\langle w\rangle}\langle x^n,y\rangle$ cannot be a QH subgroup, because then cutting the underlying surface along the curve corresponding to $w$, would yield a surface $S$ with $2$ boundary components and fundamental group $F_2$. Hence by the classification of surfaces $S$ is a punctured M\"obius strip (respectively a twice punctured projective plane, see Figure \ref{QH1}). But this implies that $w\in\langle x^n,y^2\rangle$ for some $n>1$, which has been excluded by assumption. Therefore all vertices in the JSJ decomposition are indeed rigid.\\
\begin{figure}[htbp]
\centering
\includegraphics[scale=1]{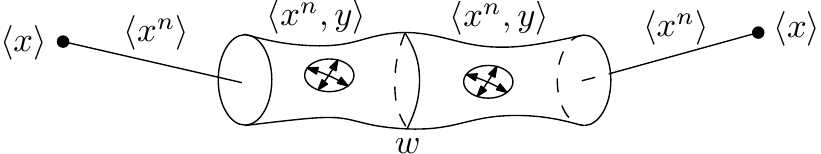}
\caption{$G_w$ does not contain a QH subgroup}
\label{QH1}
\end{figure}
\textit{Case 2}: Now we assume that there exists some (maximal) $m>1$ such that $w\in \langle x^n,y^m\rangle$. Then clearly 
$$\langle y\rangle\ast_{\langle y^m\rangle}\langle x^n,y^m\rangle\ast_{\langle x^n\rangle}\langle x\rangle$$
is a splitting of $F(x,y)$ relative to $w$ and we cannot refine 
$\langle x^n,y^m\rangle$ further as an amalgamated product relative to $\langle x^n\rangle$, $\langle y^m\rangle$ and $w$.\\
Now assume that $w$ is conjugate to $(x^{n\epsilon_1}y^{m\epsilon_2})^{\pm1}$ in $\langle x^n,y^m\rangle$, where $\epsilon_1,\epsilon_2\in\{\pm1\}$. We further distinguish between the following cases depending on $n$ and $m$.\\
First assume that $n,m>2$. Then $\langle x^n,y^m\rangle\ast_{\langle w\rangle}\langle x^n,y^m\rangle$ is the fundamental group of a surface with genus $0$ and $4$ boundary components, two corresponding to $\langle x^n\rangle$ and two corresponding to $\langle y^m\rangle$. And since elements corresponding to essential simple closed curves on surfaces have power at most $2$ in the fundamental group of the surface, the edges in $\langle y\rangle\ast_{\langle y^m\rangle}\langle x^n,y^m\rangle\ast_{\langle x^n\rangle}\langle x\rangle$ do not correspond to simple closed curves on a QH subgroup. Hence the JSJ decomposition of $G_w$ is a graph of groups with one QH vertex group, which is the fundamental group of the $4$-punctured sphere, and four rigid vertices with vertex group isomorphic to $\Z$ connected to the four boundary components (see figure \ref{QH3}).\\
\begin{figure}[htbp]
\centering
\includegraphics[scale=1]{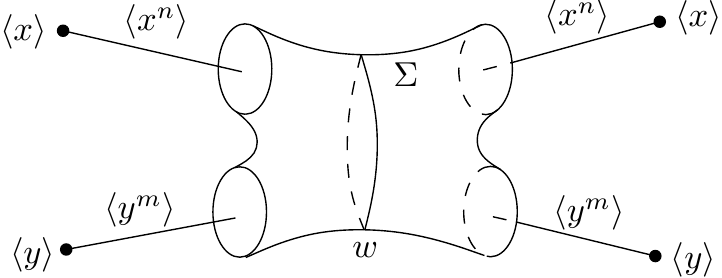}
\caption{The JSJ decomposition of $G_w$ if $w$ is conjugate to $(x^ny^m)^{\pm 1}$}
\label{QH3}
\end{figure}
Now assume that $n>2$ and $m=2$. In this case we can replace the vertices stabilized by $\langle x\rangle$ together with their adjacent edges in Figure \ref{QH3} by M\"obius strips which are glued along their boundaries to the $4$-punctured sphere. Hence a JSJ decomposition of $G_w$ is a graph of groups with one QH vertex group, which is the fundamental group of a twice-punctured Klein bottle $\Sigma$, and two rigid vertices with vertex group isomorphic to $\Z$ connected to the two boundary components of $\Sigma$. The same arguments holds if $n=2$ and $m>2$.\\
So for the last case assume that $n=2=m$. Then we can replace all four vertices with cyclic vertex groups (and their adjacent edges) in figure \ref{QH3} by M\"obius strips which are glued along their boundaries to the boundary components of the $4$-punctured sphere. But in this case clearly $G_w$ is the fundamental group of a non-orientable surface of genus $4$ and this was excluded by the assumption of the theorem.\\
Now assume that $w$ is not conjugate to $(x^{n\epsilon_1}y^{m\epsilon_2})^{\pm1}$, $\epsilon_1,\epsilon_2\in\{\pm1\}$. Then refining $A$ and $B$ by
$$\langle y\rangle\ast_{\langle y^m\rangle}\langle x^n,y^m\rangle\ast_{\langle x^n\rangle}\langle x\rangle$$
yields a JSJ decomposition of $G_w$. In particular all vertex groups are rigid. This holds since contrary to the case above, $\langle x^n,y^m\rangle\ast_{\langle w\rangle}\langle x^n,y^m\rangle$ cannot be a QH subgroup, because otherwise cutting the underlying surface $\Sigma$ along the curve corresponding to $w$, would yield two copies of a surface $S$ with $3$ boundary components and as $\pi_1(S)$ is generated by the elements $x^n$ and $y^m$ corresponding to the boundary components of $\Sigma$, $S$ has genus $0$ (see Figure \ref{QH3}), and the third boundary component is generated by a conjugate of $(x^{n\epsilon_1}y^{m\epsilon_2})^{\pm 1}$ and therefore $w$ is conjugate to $(x^{n\epsilon_1}y^{m\epsilon_2})^{\pm1}$, a contradiction. 
\end{proof}

\begin{lemma}\label{lemma2}
If $w\in\langle xyx^{-1},y\rangle$ for some basis $\{x,y\}$ of $F_2$ and $w\notin\langle a^n,b\rangle$ for any basis $\{a,b\}$ of $F_2$ and any $n>1$, then a JSJ decomposition of $G_w$ has either only rigid vertices and is
one of the graphs of groups which we get by substituting either $\langle xyx^{-1},y\rangle\ast_{\langle y\rangle}$ or one of the three graphs of groups in Figure \ref{HNN-Fall} for $A$ and $B$, 
or has one QH vertex and is the graph of groups in Figure \ref{QH4}.
\end{lemma}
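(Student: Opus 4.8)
The plan is to mirror the proof of Lemma \ref{lemma1}, but now in the HNN setting dictated by property (1). First I would record the basic reduction: since $w$ satisfies property (1) but not property (2), Proposition \ref{reference} (2) shows that $F_2$ admits no amalgamated product splitting relative to $w$ (every such splitting is a root-pulling $\langle x\rangle\ast_{\langle x^n\rangle}\langle x^n,y\rangle$, and having $w$ in the large factor is exactly property (2)), whereas the hypothesis $w\in\langle xyx^{-1},y\rangle$ exhibits the HNN splitting $F_2=\langle xyx^{-1},y\rangle\ast_{\langle y\rangle}$ relative to $w$, with stable letter $x$ conjugating $\langle y\rangle$ to $\langle xyx^{-1}\rangle$. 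By JSJ theory it therefore suffices to understand all HNN splittings of the vertex group $A$ (equivalently $B$) relative to $w$ and to its incident edge group $\langle w\rangle$.

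Next I would pin down a maximal such HNN splitting using Proposition \ref{reference} (3): any HNN splitting of $A$ relative to $w$ has the form $\langle H,t\mid ta^nt^{-1}=b\rangle$ with $\langle a\rangle,\langle b\rangle$ malnormal in $H\cong F_2$. I would fix the splitting so that the vertex group $H$ cannot be enlarged, and then study whether $H$ admits a further splitting relative to $w$ and to its two incident cyclic (malnormal) edge groups. This is the technical core and runs parallel to Case 1 of Lemma \ref{lemma1}: given a hypothetical refinement, I would apply the argument behind Theorem $1A$ of \cite{weidmann2} to a two-element generating set, replacing it up to Nielsen equivalence by $\{x,\bar y\}$ whose action on the refined Bass--Serre tree is either both-elliptic or hyperbolic-plus-elliptic. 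Malnormality and conjugacy-separation of the edge groups let me rule out the hyperbolic case by exhibiting a proper invariant subtree (contradicting minimality), and rule out the wrong elliptic case by a homomorphism-to-$\Z_k$ argument; the surviving case forces $w$ into a deeper subgroup of the same controlled type.

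I would then organize the conclusion by the depth of $w$. When no further root can be pulled, all vertices are rigid and the JSJ decomposition is obtained by substituting $\langle xyx^{-1},y\rangle\ast_{\langle y\rangle}$ for $A$ and $B$; when roots can be pulled out of one or both associated subgroups, the refinement produces the three graphs of groups of Figure \ref{HNN-Fall}, again with rigid vertices. Finally I would isolate the exceptional position of $w$ for which the doubled vertex group is a surface group: as in Lemma \ref{lemma1}, I would cut the candidate surface along the curve represented by $w$ and read off from the classification of surfaces (via Euler characteristic and boundary count) that this can occur only for one specific conjugacy class of $w$, yielding the single QH vertex and the graph of groups of Figure \ref{QH4}; every other position is excluded by the standing assumption that $G_w$ is not a closed surface group.

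The main obstacle I expect is the HNN refinement analysis of $H$ relative to its two distinct associated subgroups. Unlike the amalgam of Lemma \ref{lemma1}, here the stable letter $x$ naturally acts hyperbolically, so the dichotomy must be applied with care to avoid mistaking the genuine HNN splitting for a spurious refinement; correctly distinguishing the three admissible refinements in Figure \ref{HNN-Fall} from the inadmissible ones, and separating the rigid outcomes from the single QH outcome, is where the delicate bookkeeping lies.
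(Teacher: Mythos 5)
Your overall skeleton (no relative amalgam splittings of $F_2$, existence of the HNN splitting, classification of refinements, then the rigid outcomes versus the single QH outcome read off from the classification of surfaces) matches the paper, and you land on the correct list of decompositions. But the technical core of your plan, as stated, would fail. You propose to transplant the dichotomy of Lemma \ref{lemma1} and to ``rule out the hyperbolic case by exhibiting a proper invariant subtree.'' In the HNN setting this is impossible: every conjugate of the vertex group $\langle xyx^{-1},y\rangle$ (and hence of any vertex group of any refinement) lies in the kernel of the homomorphism $F(x,y)\to\Z$ sending $x\mapsto 1$, $y\mapsto 0$, so $x$ acts hyperbolically on the Bass--Serre tree of \emph{every} refinement. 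The hyperbolic-plus-elliptic case is therefore not a contradiction to be excluded; it is the only case that occurs, and the invariant-subtree and $\Z_k$-homomorphism arguments must instead be run \emph{inside} that case, applied to the elliptic generator $y$, to decide whether $y$ is conjugate to the pulled root. This is exactly how the paper deploys the Theorem $1A$ machinery --- but in the proof of Lemma \ref{lemma3}, not here. You flag this inversion as ``delicate bookkeeping'' without resolving it, and that is precisely the missing step.

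In fact the paper's proof of Lemma \ref{lemma2} avoids the Nielsen machinery altogether, via two observations your proposal lacks and which do the real work. First, since $F_2$ surjects onto the fundamental group of the underlying graph of any refinement of $A$, the first Betti number of such a refinement is at most one; hence any relative refinement of the HNN vertex group $\langle xyx^{-1},y\rangle$ must be an amalgamated product $X\ast_E Y$ (your plan never excludes further HNN refinements). Second, because $F_2$ admits no amalgamated splitting relative to $w$, the refined graph of groups can have no separating edge; combined with Proposition \ref{reference} this forces $X=\langle y\rangle$ or $X=\langle xyx^{-1}\rangle$ and $E=\langle y^m\rangle$ or $E=\langle xy^mx^{-1}\rangle$, so the refined graph is not reduced and collapses to an \emph{unfolding} of the original HNN splitting (Figure \ref{unfolding}). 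It is this unfolding classification, iterated with $m,n$ maximal, that yields exactly the three graphs of Figure \ref{HNN-Fall}, together with the constraint $\gcd(m,n)=1$ in the two-parameter case (otherwise one could pull a root of $y$ and obtain a forbidden amalgam splitting) --- a condition absent from your write-up, even though it is what separates this lemma's conclusion from that of Lemma \ref{lemma3}, and what pins down the exceptional QH words as conjugates of $(y^{\pm m}xy^{\pm n}x^{-1})^{\pm 1}$ with $m\neq n$ in Figure \ref{QH4}.
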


\begin{figure}
\centering
\includegraphics[scale=1]{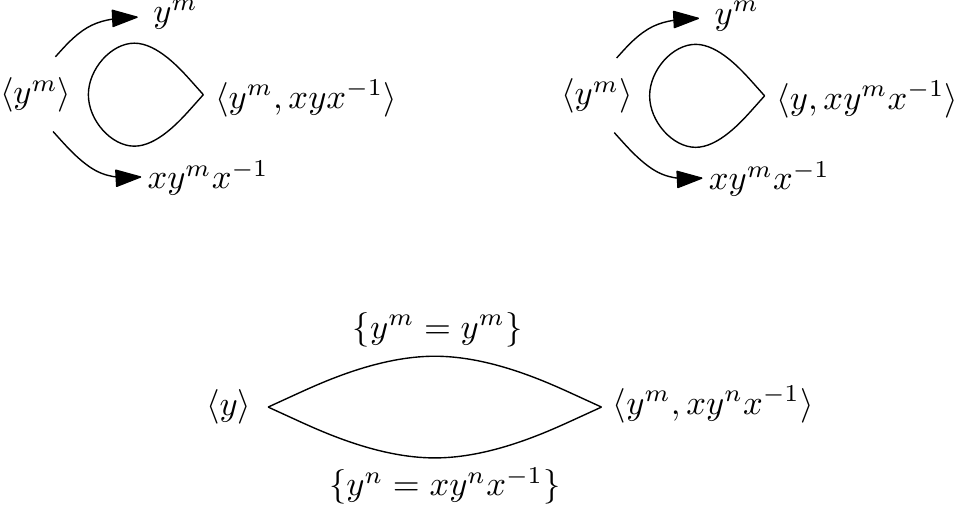}
\caption{The possible relative JSJ decompositions of $F_2$ in the HNN-splitting case}
\label{HNN-Fall}
\end{figure}

\begin{proof}
As in the proof of Proposition \ref{jsj} we conclude that there exists no splitting of $F_2$ as an amalgamated product relative to $w$. But since $w\in\langle xyx^{-1},y\rangle$ for some basis $\{x,y\}$ of $F_2$ there exists (at least) one relative splitting of $F_2$ as an HNN-extension of the form $\langle xyx^{-1},y\rangle\ast_{\langle y\rangle}$ with the embeddings $y\mapsto xyx^{-1}$ and $y\mapsto y$, which does moreover not correspond to a non-boundary parallel closed curve on a surface corresponding to a QH subgroup of $G_w$. The moreover part holds, since by assumption $G_w$ is not a surface group.\\
Since $F_2$ projects onto the fundamental group of the underlying graph of any refinement of $A$, it follows that the first Betti number of any refinement of $A$ is at most one. 
Hence a (relative) refinement of $\langle xyx^{-1},y\rangle$ has to be an amalgamated product, say $X\ast_EY$. By Proposition \ref{reference} we can assume that $X\cong\Z$. Since by assumption there does not exist a splitting of $F(x,y)$ as an amalgamated product relative $w$, we conclude that the refined graph of groups has no separating edge and hence we can assume that either $X=\langle y\rangle$ or $X=\langle xyx^{-1}\rangle$ (see Figure \ref{unfolding}).\\
Moreover we conclude that either $E=\langle y^m\rangle$ or $E=\langle xy^mx^{-1}\rangle$ for some $m>1$. Hence the resulting graph of groups (after plugging in $X\ast_EY$ for $\langle xyx^{-1},y\rangle$) is not reduced and after collapsing the original edge, we get one of the following graphs of groups:
$$\langle xyx^{-1},y^m\rangle\ast_{\langle y^m\rangle}\text{ or }\langle xy^mx^{-1},y\rangle\ast_{\langle xy^mx^{-1}\rangle}$$
and therefore this new graph of groups is just an unfolding of the original one (see Figure \ref{unfolding}).\\
\begin{figure}
\centering
\includegraphics[scale=1]{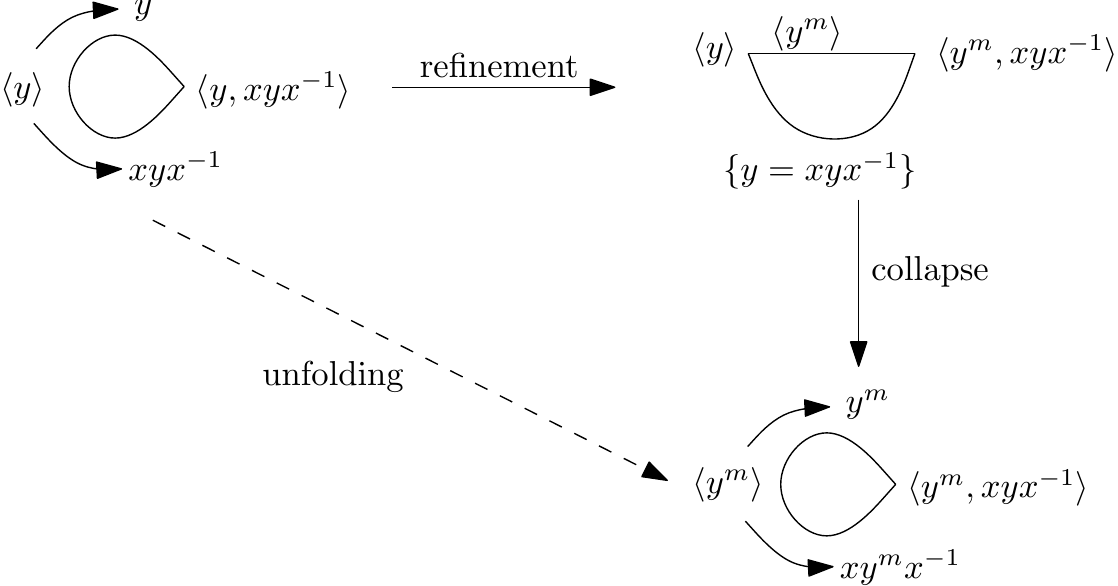}
\caption{Unfolding the graph of groups in the HNN case}
\label{unfolding}
\end{figure}
So if we choose $m,n\geq 1$ maximal such that 
$w\in \langle y^m,xy^nx^{-1}\rangle$ and $w$ is not conjugate to 
$(y^{\pm m}xy^{\pm n}x^{-1})^{\pm 1}$, it follows that (depending on $w$) the only possible relative JSJ decompositions of $F_2$ relative $w$ (up to folding) are the three decompositions shown in Figure \ref{HNN-Fall}. Note that the top two graphs of groups are in fact isomorphic, since both are graphs of groups of the form $$F(a,b)\ast_{\Z}$$ with the embeddings $\alpha: \Z\to \langle a^m\rangle $ and $\omega: \Z\to \langle b^m\rangle$, and $w\in F(a,b)$.
It follows as in the amalgamated product case that all vertex groups are rigid.\\
Note that in the case that $m,n>1$ (and hence the splitting is the bottom graph of groups of Figure \ref{HNN-Fall}),
we have that $k:=\gcd(m,n)=1$ since otherwise we could pull out a root of $y$, i.e. refine the graph of groups by replacing the vertex with vertex group $\langle y\rangle$ by the graph of groups 
$$\langle y\rangle\ast_{\langle y^k\rangle}\langle y^k\rangle$$ and get a non-trivial amalgamated product, a contradiction.\\
So assume now that $m\neq n\geq 1$ are maximal such that $w$ is conjugate to $(y^{\pm m}xy^{\pm n}x^{-1})^{\pm 1}$. Note that again $\gcd(m,n)=1$. Then by the same argument as in the lemma before, $\langle y^m,xy^nx^{-1}\rangle\ast_{\langle w\rangle} \langle y^m,xy^nx^{-1}\rangle$ is the fundamental group of the $4$-punctured sphere, i.e. the surface with genus $0$ and $4$ boundary components, two corresponding to $\langle y^m\rangle$ and two corresponding to $\langle xy^nx^{-1}\rangle$, and hence a JSJ decomposition of $G_w$ is a graph of groups with one QH subgroup, which is the fundamental group of the $4$-punctured sphere, and two rigid vertices with vertex group isomorphic to $\Z$ each connected to two boundary components (see Figure \ref{QH4}).
\begin{figure}[htbp]
\centering
\includegraphics[scale=1]{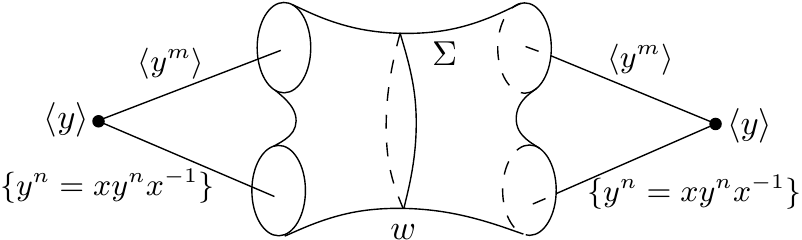}
\caption{The JSJ decomposition of $G_w$ if $w$ is conjugate to $(y^{\pm m}xy^{\pm n}x^{-1})^{\pm 1}$}
\label{QH4}
\end{figure}
\end{proof}

\begin{lemma}\label{lemma3}
If $w\in\langle a^n,b\rangle$ for some  $n>1$ and some basis $\{a,b\}$ of $F_2$ and moreover $w\in\langle xyx^{-1},y\rangle$ for some basis $\{x,y\}$ of $F_2$, then a JSJ decomposition of $G_w$ is one of the graph of groups which we get by refining the vertices $A$ and $B$ in $A\ast_CB$ by one of the graphs of groups in Figure \ref{3-Fall} (in which case all vertices are rigid), the graph of groups in Figure \ref{QH4} with $\gcd(m,n)>1$, or the graph of groups in Figure \ref{QH5}.
\end{lemma}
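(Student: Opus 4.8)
The plan is to run the two case analyses of Lemmas \ref{lemma1} and \ref{lemma2} simultaneously, exploiting that $w$ now satisfies \emph{both} membership conditions. As in the proof of Proposition \ref{jsj}, a JSJ decomposition of $G_w$ is obtained by refining both factors $A$ and $B$ of $A\ast_C B$ by a relative cyclic JSJ decomposition of $F_2$ relative to $\langle w\rangle$, so the entire problem reduces to enumerating the relative $\mathbb{Z}$-splittings of $F_2$ rel $w$. The new feature, absent in the previous two lemmas, is that $F_2$ relative to $w$ now admits both an amalgamated splitting of root type $\langle x^n,y\rangle\ast_{\langle x^n\rangle}\langle x\rangle$ (from $w\in\langle a^n,b\rangle$, by Proposition \ref{reference} (2)) and an HNN splitting $\langle xyx^{-1},y\rangle\ast_{\langle y\rangle}$ (from $w\in\langle xyx^{-1},y\rangle$, by Proposition \ref{reference} (3)). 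First I would record both of these and observe that they must be assembled into a single relative JSJ graph, so that the legs attaching through root edges (as in Lemma \ref{lemma1}) and the unfolded HNN structure (as in Lemma \ref{lemma2}) now coexist.

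Next I would fix the parameters maximally, exactly as before: choose $n>1$ maximal so that the root-type amalgamated splitting survives, and simultaneously choose the exponents $m,n$ in the HNN unfolding of Lemma \ref{lemma2} maximal. Since both relations hold, $w$ is forced to be conjugate into a rank-two subgroup of the form $\langle y^m,xy^n x^{-1}\rangle$ whose two natural generators are now allowed to be proper powers. At this point I would split into the familiar dichotomy: either (i) $w$ is conjugate to the surface word $(y^{\pm m}xy^{\pm n}x^{-1})^{\pm 1}$, or (ii) it is not.

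In case (i) I would argue as in Lemma \ref{lemma2}: cutting the doubled piece $\langle y^m,xy^n x^{-1}\rangle\ast_{\langle w\rangle}\langle y^m,xy^n x^{-1}\rangle$ along the curve representing $w$ realises it as the fundamental group of a $4$-punctured sphere, producing a QH vertex. The crucial difference from Lemma \ref{lemma2} is that here $\gcd(m,n)$ need no longer equal $1$: whenever $m,n$ have a common factor $d>1$, the boundary curves $y^m=(y^{d})^{m/d}$ and $xy^n x^{-1}=(xy^{d}x^{-1})^{n/d}$ are themselves proper powers, so roots can be pulled out along the boundaries and the cyclic legs attach through non-trivial root edges; this yields Figure \ref{QH4} with $\gcd(m,n)>1$, while the arithmetic on $m$ and $n$ that causes the boundary legs to be identified or to differ produces instead the configuration of Figure \ref{QH5}. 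As in the earlier lemmas I would use that an essential simple closed curve has power at most $2$ in a surface group to certify that these boundary edges are genuinely QH and not further subdividable, and I would reuse the surface-classification argument to exclude any larger surface (in particular the closed non-orientable genus-$4$ surface, which is ruled out by the standing hypothesis that $G_w$ is not a surface group).

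In case (ii) I would show, as in Lemma \ref{lemma1} Case 1, that no further relative splitting of the pieces is possible: combining Proposition \ref{reference} with the Nielsen-equivalence normal form of Theorem $1A$ in \cite{weidmann2} (the $T_x\cap\bar y T_x$ argument) rules out any hyperbolic generator and any additional amalgamated or HNN refinement, so that refining $A$ and $B$ in $A\ast_C B$ by one of the graphs of groups in Figure \ref{3-Fall} gives a JSJ decomposition all of whose vertices are rigid. I expect the main obstacle to be precisely the bookkeeping of the interaction between the root-type and HNN-type edges: one must check that the two maximal-exponent choices are mutually compatible, that pulling out roots along the boundary curves does not accidentally manufacture a surface already forbidden by hypothesis, and that the first-Betti-number constraint on refinements of the HNN vertex (as in Lemma \ref{lemma2}) still pins down the admissible unfoldings. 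Verifying that the resulting trichotomy --- Figure \ref{3-Fall}, Figure \ref{QH4} with $\gcd(m,n)>1$, and Figure \ref{QH5} --- is genuinely exhaustive, i.e. that no other combination of the two splitting types can arise, is the delicate point of the argument.
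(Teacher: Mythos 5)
Your skeleton coincides with the paper's: reduce everything to the cyclic JSJ decomposition of $F_2$ relative to $\langle w\rangle$, note that both a root-type amalgamated splitting and an HNN-type splitting now exist, fix exponents maximally, and then split on whether $w$ is conjugate to $(y^{\pm m}xy^{\pm n}x^{-1})^{\pm 1}$. But the proposal stops exactly where the actual proof begins. The technical heart of the paper's argument is to show that the two kinds of splittings can combine in only two specific ways, namely the two graphs of groups of Figure \ref{3-Fall}. Concretely: starting from a loop-type splitting $\langle y,xy^mx^{-1}\rangle\ast_{\{ty^mt^{-1}=xy^mx^{-1}\}}$, one takes a maximal amalgamated refinement $\langle e\rangle\ast_{\langle e^k\rangle}H$ of the vertex group (which exists by the hypothesis $w\in\langle a^n,b\rangle$) and proves, via the Bass--Serre tree and Nielsen-equivalence argument of Theorem 1A in \cite{weidmann2}, that $x$ acts hyperbolically on the refined tree, that $y$ cannot be conjugate into $H$ (else $\langle\bar{x},y\rangle T_y$ is a proper invariant subtree, contradicting minimality), and hence that $e=y$ up to conjugacy with $k$ dividing the HNN exponent; one must also rule out the hybrid configuration with one loop edge and two non-loop edges (Figure \ref{unmoglich}). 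You invoke the Kapovich--Weidmann machinery only in your case (ii), to certify rigidity, and never use it to pin down the shape of the combined decomposition --- yet this is precisely what makes your trichotomy exhaustive. You yourself flag this compatibility check as ``the delicate point of the argument''; naming the delicate point without carrying it out leaves the proof with a genuine gap.

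Second, your account of when Figure \ref{QH4} with $\gcd(m,n)>1$ arises versus Figure \ref{QH5} is off. The dichotomy is not governed by ``arithmetic on $m$ and $n$ causing the boundary legs to be identified or to differ''; in the paper it tracks which of the two relative JSJ types one is in. The top graph of Figure \ref{3-Fall} (loop edge plus root edge, $k\mid m$) is first converted, by sliding the loop edge over the root edge, into a two-vertex graph with two non-loop edges, after which the analysis of Lemma \ref{lemma2} applies verbatim and yields Figure \ref{QH4} with $\gcd(m,n)>1$ in the surface-word case; the bottom graph of Figure \ref{3-Fall} (root of order $k=\gcd(m,n)$ pulled out of $\langle y\rangle$) yields Figure \ref{QH5} instead. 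The edge-sliding step, which is what licenses quoting Lemma \ref{lemma2} at all, is absent from your proposal, and without it the correspondence between the two relative JSJ types and the two QH figures cannot be established.
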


\begin{proof}
Again it suffices to consider a JSJ decomposition of $F_2$ relative $w$. 
Since $w\in\langle xyx^{-1},y\rangle$ for some basis $\{x,y\}$ of $F_2$, $F_2$ admits in particular a relative splitting which is of one of the three types in Figure \ref{HNN-Fall} with neither the restriction that $\gcd(m,n)=1$ as in Lemma \ref{lemma2} nor that necessarily $m>1$ or $n>1$.\\
\textit{Case 1:} Assume that $F_2$ admits the third type of splitting (the one with two edges). Since the vertex group $\langle y^m, xy^nx^{-1}\rangle$ does not admit any further refinement as an amalgamated product relative $\langle y^m\rangle$, $\langle xy^nx^{-1}\rangle$ and $\langle w\rangle$, it follows that we can pull a root of order $k:=\gcd(m,n)>1$ out of the vertex group $\langle y\rangle$ to refine the graph of groups. The refined graph of groups is the bottom one in Figure \ref{3-Fall} and since there exists no further refinement, this is a JSJ decomposition of $F_2$ relative $w$.\\
\textit{Case 2:} So now assume that $F_2$ admits a splitting relative $w$ of the form:
$$\langle y^n,xyx^{-1}\rangle\ast_{\{ty^nt^{-1}=xy^nx^{-1}\}}$$
or 
$$\langle y,xy^mx^{-1}\rangle\ast_{\{ty^mt^{-1}=xy^mx^{-1}\}}$$
for some maximal $m,n\geq 1$, i.e. we are in one of the two top cases of Figure \ref{HNN-Fall}. By assumption of the lemma there exists a further refinement of the vertex group as an amalgamated product. So let $k>1$ maximal such that $\langle e\rangle\ast_{\langle e^k\rangle} H$ is a splitting of $\langle y,xy^mx^{-1}\rangle$ as an amalgamated product relative to $\langle y^m\rangle$ and $\langle xy^mx^{-1}\rangle$ (we only consider the second case, the first one is similar). We want to show that $e=y$, so let $\A$ be the graph of groups which we get by refining the vertex group in 
$$\langle y,xy^mx^{-1}\rangle \ast_{\{ty^mt^{-1}=xy^mx^{-1}\}}$$
by $\langle e\rangle \ast_{\langle e^k\rangle}H$ and denote by $T_A$ the corresponding Bass-Serre tree. First we note that $x$ acts hyperbolically on $T_A$, since otherwise 
$$x\in\langle e\rangle \ast_{\langle e^k\rangle}H=\langle y,xy^mx^{-1}\rangle$$
and hence $\langle y,xy^mx^{-1}\rangle=F(x,y)$, a contradiction. Moreover $y$ acts elliptically on $T_A$, as $y^m$ does.
As in the proof of Lemma \ref{lemma1} we can deduce from (the proof of) Theorem 1A in \cite{weidmann2} that $\{x,y\}$ is Nielsen equivalent to $\{\bar{x},y\}$ such that either
\begin{enumerate}[(a)]
\item $T_{\bar{x}}\cap T_y\neq\emptyset$ and $\bar{x},y$ act elliptically on $T_A$ or
\item $T_y\cap \bar{x}T_y\neq\emptyset$ and $\bar{x}$ acts hyperbolically, $y$ elliptically on $T_A$.
\end{enumerate}
If $\bar{x}$ acts elliptically on $T_A$ this yields an immediate contradiction to the minimality of $\A$, hence we are in case $(b)$. Since $y$ acts elliptically on $T_A$, it is either conjugate to an element of $H$ or conjugate to $e$.\\
So suppose that $y$ is not conjugate to $e$, i.e. $y$ is conjugate into $H$. Then $T_y\subset T_A$ is the star of the vertex stabilized by $H$ and all other vertices of $T_y$ are stabilized by conjugates of $H$, while the edges are stabilized by $\langle y^m\rangle$. Clearly $\bar{x}T_y$ is of the same form. Since $T_y\cap \bar{x}T_y\neq \emptyset$ we conclude that $\langle \bar{x},y\rangle T_y\subset T_A$ is an $F(x,y)$-invariant subtree, a contradiction to the minimality of $\A$. Hence $y$ is conjugate to $e$ and by the same arguments as in the amalgamated product case (Lemma \ref{lemma1}) it follows without loss of generality that $e=y$ and $k$ divides $n$. Hence $F_2$ has a relative splitting $\A$ which is of one of the two types:\\
$$\langle xyx^{-1}\rangle\ast_{\langle xy^kx^{-1}\rangle}\langle y^n,xy^kx^{-1}\rangle\ast_{\{ty^nt^{-1}=xy^nx^{-1}\}}$$
where $k$ divides $n$, or 
$$\langle y\rangle\ast_{\langle y^k\rangle}\langle y^k,xy^mx^{-1}\rangle\ast_{\{ty^mt^{-1}=xy^mx^{-1}\}}$$
where $k$ divides $m$.
Since we have chosen $k$ maximal, by the same arguments as in the proof of Lemma \ref{lemma1} $\A$ has no further refinement, i.e. $\A$ cannot have one loop edge and two non-loop edges as in Figure \ref{unmoglich}. In addition as in the proof of Lemma \ref{lemma2} these two graphs of groups are isomorphic.\\ 
\begin{figure}[htbp]
\centering
\includegraphics[scale=1]{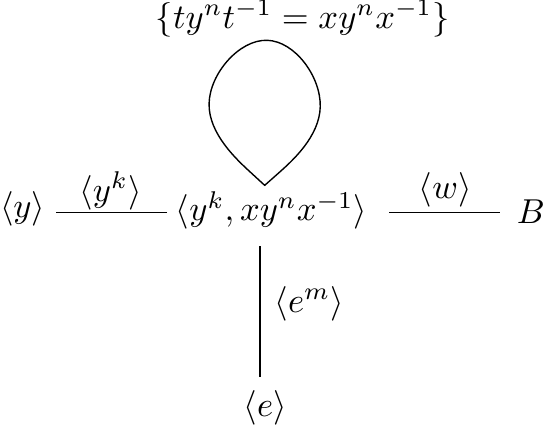}
\caption{$\A$ cannot have one loop edge and two non-loop edges originating from the same vertex}
\label{unmoglich}
\end{figure}
Therefore under the assumptions of Lemma \ref{lemma3} a relative JSJ decomposition of $F_2$ is of one of the two types pictured in Figure \ref{3-Fall}.
\begin{figure}
\centering
\includegraphics[scale=1]{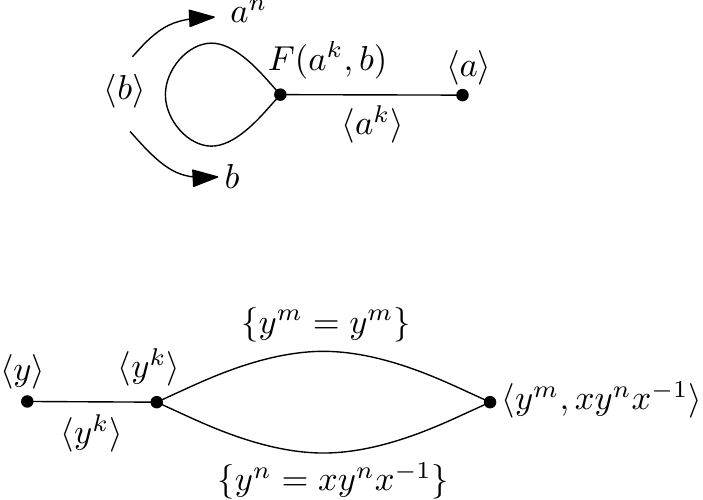}
\caption{The possible relative JSJ decompositions of $F_2$. In the top graph of groups $k$ divides $n$, in the bottom one is $k=\gcd(m,n)$}
\label{3-Fall}
\end{figure}
To complete the proof that a JSJ decomposition of $G_w$ has the desired form, it remains to consider QH vertices. 
We first consider the case that the relative JSJ decomposition of $F_2$ has the form $$\langle y\rangle\ast_{\langle y^k\rangle}\langle y^k,xy^mx^{-1}\rangle\ast_{\{ty^mt^{-1}=xy^mx^{-1}\}}.$$
Note that there exist $l\in\Z$ such that $lk=m$ and therefore 
by sliding the edge corresponding to the loop edge over the non-loop edge we get a graph of groups which has two vertices and two non-loop edges (corresponding to HNN-extensions) connecting these vertices. Hence by the same arguments as in the HNN case (Lemma \ref{lemma2}), we conclude that the JSJ decomposition is as claimed, i.e. with only rigid vertices, if $w$ is not conjugate to $(y^{\pm m}xy^{\pm n}x^{-1})^{\pm 1}$ and as in Figure \ref{QH4} elsewise. In particular in this case there exists a QH vertex group.\\
Now we consider the case that the relative JSJ decomposition is the bottom graph of groups in Figure \ref{3-Fall}. Again by the same arguments as before, we conclude that a JSJ decomposition of $G_w$ is as claimed in the lemma, with only rigid vertices if $w$ is not conjugate to $(y^{\pm m}xy^{\pm n}x^{-1})^{\pm 1}$ and as in Figure \ref{QH5} otherwise. Again in this case there exists a QH vertex group.
\begin{figure}[htbp]
\centering
\includegraphics[scale=1]{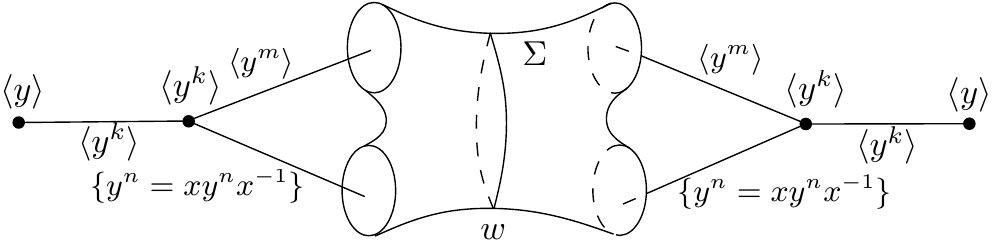}
\caption{The JSJ decomposition of $G_w$ if $w$ is conjugate to $(y^{\pm m}xy^{\pm n}x^{-1})^{\pm 1}$}
\label{QH5}
\end{figure}
\end{proof}


\begin{thebibliography}{9999999}
\bibitem[Ba]{bbaumslag} B. Baumslag, Residually free groups. Proc. London Math. Soc. (3), No. 17 (1967), 402-418
\bibitem[Ca]{cashen} C. Cashen, Splitting line patterns in free groups, Alg. Geom. Topol. 16 (2016), no. 2, 621-673
\bibitem[CaMa]{cashenprogram} C. Cashen, J. Manning, virtually geometric,
2014, computer program, https://bitbucket.org/christopher\_cashen/virtuallygeometric
\bibitem[GuLe]{guirardel} V. Guirardel, G. Levitt, JSJ decompositions of groups, arXiv:1602.05139v1 [math.GR] 16 Feb 2016
\bibitem[Ha]{hall} M. Hall, Coset representations in free groups, Trans. Am. Math. Soc. 67 (1949), 421-432
\bibitem[Iv]{ivanov} S. V. Ivanov; On certain elements of free groups, J. Algebra 204 (1998), 394-405
\bibitem[KaWe1]{kapovichweidmann} I. Kapovich, R. Weidmann, On the structure of two-generated hyperbolic groups, Math. Z. 231 (1999), 783-801
\bibitem[KaWe2]{weidmann2} I. Kapovich, R. Weidmann, Two-generated groups acting on trees, Arch. Math. 73 (1999), 172-181
\bibitem[KaSo]{solitar} A. Karrass, D. Solitar, The free product of two groups with malnormal amalgamated subgroup, Can. J. Math., Vol. XXIII, No. 6, 1971, 933-959
\bibitem[Le]{lee} D. Lee; On certain C-test words for free groups, J. Algebra, vol.247 (2002), pp.509-540
\bibitem[LoTo]{loudertouikan} L. Louder, N. Touikan, Magnus pairs in, and free conjugacy separability of, limit groups , Geometria Dedicata, Published online 06 January 2018
\bibitem[Ma]{mal'cev} A. I. Mal'cev, On homomorphisms onto finite groups, Amer. Math. Soc. Translations, Series 2, 119 (1983), 67-79
\bibitem[Mo]{mathoverflow} https://mathoverflow.net/questions/163585/can-finitely-generated-subgroups-of-limit-groups-be-detected-in-free-group-quoti
\bibitem[Sc]{scott} P. Scott, Subgroups of surface groups are almost geometric, J. London Math. Soc. 17 (3) (1978), 555-565
\bibitem[Se]{sela} Z.Sela, Diophantine Geometry over groups I: Makanin-Razborov diagrams, Publications Mathematiques de l'IHES 93 (2001), 53-104
\bibitem[To1]{touikan} N. Touikan, On the one-endedness of graphs of groups,   Pacific J. Math. 278 (2015), no. 2, 463-478
\bibitem[To2]{touikan2} N. Touikan, The equation $w(x, y) = u$ over free groups: an algebraic approach, Journal of group theory, Volume 12 (2009), Issue 4, 611-634
\bibitem[To3]{touikan3} N. Touikan, The fully residually $F$ quotients of $F\ast\langle x,y \rangle$, Groups, Geometry and Dynamics, Volume 6 (2012), Issue 1, 155-220
\bibitem[Tu]{turner} E. Turner, Test words for automorphisms of free groups, Bull. London Math. Soc. 28 (1996), no.3, 255-263
\bibitem[Wi]{wilton} Henry Wilton, Hall's Theorem for limit groups, Geom. Funct. Anal. 18  (2008), pp. 271-303
\end{thebibliography}
\end{document}